\DeclareMathOperator{\hdim}{\dim_H}
\DeclareMathOperator{\mul}{mul}
\DeclareMathOperator{\cl}{cl}
\theoremstyle{plain}
\newtheorem{theorem}{Theorem}[section]
\newtheorem{lemma}[theorem]{Lemma}
\newtheorem{proposition}[theorem]{Proposition}
\newtheorem{corollary}[theorem]{Corollary}
\newtheorem*{claim}{Claim}
\theoremstyle{definition}
\newtheorem{definition}[theorem]{Definition}
\theoremstyle{remark}
\newtheorem{remark}[theorem]{Remark}
\newtheorem{remarks}[theorem]{Remarks}
\begin{document}

\title{Exceptional sets to Shallit's law of leap years in Pierce expansions}

\author{Min Woong Ahn}
\address{Department of Mathematics, SUNY at Buffalo, Buffalo, NY 14260-2900, USA}
\email{minwoong@buffalo.edu}
\curraddr{\sc Department of Mathematics Education, Silla University, 140, Baegyang-daero 700beon-gil, Sasang-gu, Busan, 46958, Republic of Korea}
\email{minwoong@silla.ac.kr}

\date{\today}

\subjclass[2020]{Primary 11K55; Secondary 28A80, 85A99}
\keywords{leap year, exceptional set, Pierce expansion, Hausdorff dimension}

\begin{abstract}
In his 1994 work, Shallit introduced a rule for determining leap years that generalizes both the historically used Julian calendar and the contemporary Gregorian calendar. This rule depends on a so-called intercalation sequence. According to what we term Shallit's law of leap years, almost every point of the interval $[0,1]$ with respect to the Lebesgue measure has the same limsup and liminf, respectively, of a quotient defined in terms of the number of leap years determined by the rule using the Pierce expansion digit sequence as an intercalation sequence. In this paper, we show that the set of exceptions to this law is dense and has full Hausdorff dimension in $[0,1]$, and that the exceptional set intersected with any non-empty open subset of $[0,1]$ has full Hausdorff dimension in $[0,1]$. As a more general result, we establish that for certain subsets of $[0,1]$ concerning the limiting behavior of Pierce expansion digits, intersecting with a non-empty open subset of $[0,1]$ preserves the Hausdorff dimension.
\end{abstract}

\maketitle

\tableofcontents

%Section 1 Introduction
\section{Introduction} \label{Introduction}

A calendar year consists of either $365$ or $366$ days, and a year with $366$ days is called a {\em leap year}. In practice, the more common occurrence is a non-leap year, easily observed by an ordinary person without a thorough understanding of calendrical systems. This discrepancy is attributed to the fact that the length of a {\em tropical year}, one of the various definitions of the Earth's orbital period around the Sun that calendars aim to match, averages approximately $365.242189$ days (see \cite[Chapter 14]{RD18}). It falls between precisely $365$ and $366$ days but is closer to $365$.

The calendar commonly used today is known as the Gregorian calendar. The rule for designating a leap year in the Gregorian calendar is essentially based on the inclusion-exclusion principle in basic set theory. To elaborate, for constructing the set of leap years, we initially include all years that are multiples of $4$, then exclude those that are multiples of $100$, and finally include those that are multiples of $400$. To put it formally, a year $N$ is a leap year in the Gregorian calendar if and only if
\[
\sum_{k=1}^3 (-1)^{k+1} \mul (N, \sigma_1 \dotsm \sigma_k) = 1,
\]
where $(\sigma_1, \sigma_2, \sigma_3) \coloneqq (4, 25, 4)$ is a finite sequence comprising $3$ terms, and $\mul \colon \mathbb{N}^2 \to \{ 0, 1 \}$ is a function defined as
\[
\mul (m,n) \coloneqq \begin{cases} 1, &\text{if $m$ is an integer multiple of $n$}; \\ 0, &\text{otherwise}, \end{cases}
\]
for each $(m,n) \in \mathbb{N}^2$. For instance, we have
\begin{align*}
\sum_{k=1}^3 (-1)^{k+1} \mul (2028, \sigma_1 \dotsm \sigma_k) &= 1 + 0 + 0 = 1, \\
\sum_{k=1}^3 (-1)^{k+1} \mul (2100, \sigma_1 \dotsm \sigma_k) &= 1 + (-1) + 0 = 0, \\
\sum_{k=1}^3 (-1)^{k+1} \mul (2400, \sigma_1 \dotsm \sigma_k) &= 1 + (-1) + 1 = 1,
\end{align*}
and so, if we adhere to the Gregorian calendar, the years $2028$ and $2400$ will be leap years, while the year $2100$ will not be.

A brief mention of the Julian calendar, upon which the reformation to the Gregorian calendar was built, is in order. The Julian calendar designates a year $N$ as a leap year if
\[
\sum_{k=1}^1 (-1)^{k+1} \mul (N, \tau_1 \dotsm \tau_k) = \mul (N, \tau_1) = 1,
\]
where $(\tau_1) \coloneqq (4)$ is a finite sequence consisting of $1$ term. In other words, in the Julian calendar, leap years are exactly the years that are multiples of $4$. It should be noted that both the Julian and Gregorian calendars approximate the tropical year, but do not precisely reflect it, as there are $365+(1/4)=365.25$ and $365+(1/4)-(1/100)+(1/400) = 365.2425$ days in a calendar year on average in each calendar, respectively. Consequently, scholars in related fields, such as mathematics and astronomy, have suggested various calendars for better approximations. For a closer look at mathematical aspects or historical expositions of calendrical systems and their reformation, we recommend the interested reader to refer to the comprehensive book \cite{RD18} and short articles \cite{Dut88, Eis12, Ric85, Sha94, Swe74, Swe86}.

A generalized rule for determining leap years, which extends both the Julian and Gregorian calendars, was proposed  by Shallit \cite{Sha94}. To begin, we fix an {\em intercalation sequence}, a finite or infinite sequence of positive integers $(\sigma_k)_{k \geq 1}$ satisfying $\sigma_k \geq 2$ for all $k \geq 2$. Then, the year $N$ is declared to be a leap year if
\[
\sum_{k \geq 1} (-1)^{k+1} \mul (N, \sigma_1 \dotsm \sigma_k) = 1,
\]
where the sum is taken over the domain on which the intercalation sequence is defined.

Suppose, for simplicity, that a tropical year has exactly 
\begin{align} \label{definition of eta}
365 + \eta 
\end{align}
days for some $\eta \in [0,1]$. (This assumption does not hold in reality because the length of a tropical year is not constant. On average, the value of $\eta$ is approximately $0.242189$, as mentioned in the first paragraph.) For each $N \in \mathbb{N}$, let $L(\sigma, N)$ denote the number of leap years from year $1$ through year $N$ determined by the generalized rule that uses $\sigma \coloneqq (\sigma_k)_{k \geq 1}$ as the intercalation sequence, that is, 
\[
L(\sigma, N) \coloneqq \# \left\{ m \in \{ 1, \dotsc, N \} : \sum_{k \geq 1} (-1)^{k+1} \mul (m, \sigma_1 \dotsm \sigma_k) = 1 \right\} ,
\]
where $\#$ denotes the cardinality of a set. Contrary to the impracticality of the definition, there is a practical formula (\cite[Theorem 1]{Sha94})
\begin{align} \label{L N formula}
L(\sigma, N) = \sum_{k \geq 1} (-1)^{k+1} \left\lfloor \frac{N}{\sigma_1 \dotsm \sigma_k} \right\rfloor,
\end{align}
which again reminds us of the inclusion-exclusion principle. Note that $N \eta$ is the theoretical number, not necessarily an integer, of leap years required up to the year $N$ to keep the calendar synchronized with the actual date based on the tropical year. Therefore, for the generalized determination rule to be acceptable in the long run, a crucial requirement is that the difference $|N\eta-L(\sigma, N)|$ should remain relatively small as $N$ grows. Moreover, unless we have the precise value of $\eta$ at hand, another reasonable demand might be that the existence of an intercalation sequence $\sigma = \sigma(x)$ such that the difference $|Nx-L(\sigma, N)|$ is small for most values of $x \in [0,1]$.

In this paper, we will consider the Pierce expansion digit sequence as a candidate for an intercalation sequence. As is well known, the {\em Pierce expansion} expresses $x \in [0,1]$ in the form
\begin{align} \label{Pierce expansion}
\begin{aligned}
x 
&= \langle d_1(x), d_2(x), d_3(x), \dotsc \rangle_P \\
&\coloneqq \sum_{k \in \mathbb{N}} \frac{(-1)^{k+1}}{d_1(x) \dotsm d_k(x)} 
= \frac{1}{d_1(x)} - \frac{1}{d_1(x) d_2(x)} + \frac{1}{d_1(x) d_2(x) d_3(x)} - \dotsb,
\end{aligned}
\end{align}
with the conventions $\infty \cdot \infty = \infty$ and $1/\infty = 0$, where the {\em digit} sequence $(d_k (x))_{k \in \mathbb{N}}$ is an $(\mathbb{N} \cup \{ \infty \})$-valued sequence defined as follows. Let $d_1 \colon [0,1] \to \mathbb{N} \cup \{ \infty \}$ and $T \colon [0,1] \to [0,1]$ be mappings given by
\[
d_1(x) \coloneqq \begin{cases} \lfloor 1/x \rfloor, &\text{if } x \neq 0; \\ \infty, &\text{if } x = 0, \end{cases}
\quad \text{and} \quad
T(x) \coloneqq \begin{cases} 1 - d_1(x) x, &\text{if } x \neq 0; \\ 0, &\text{if } x = 0, \end{cases}
\]
respectively. Now, define $d_k(x) \coloneqq d_1(T^{k-1}(x))$ for each $k \in \mathbb{N}$. One classical result in Pierce expansions states that the set of all strictly increasing positive integer-valued sequences and the set of irrationals in $[0,1]$ are in bijective correspondence via the Pierce expansion. In the case where $x \in [0,1]$ is rational, the digit sequence consists of finitely many, say $n \in \mathbb{N}$, strictly increasing $\mathbb{N}$-valued terms, followed by $\infty$'s, and satisfies $d_{n-1}(x) + 1 < d_n(x)$ whenever $n \geq 2$. We will present some basic facts related to Pierce expansions in detail in Section \ref{Preliminaries}. For further exploration of basic notions and deeper results in Pierce expansions, we refer the reader to \cite{Ahn23a, Ahn24, Ahn23c, Fan15, Pie29, Sch95, Sha86, Sha94}.

Notice that the formula \eqref{L N formula} and the series expansion \eqref{Pierce expansion} are of a similar form, particularly if we take the intercalation sequence as $(d_k(x))_{k \in \mathbb{N}}$ in \eqref{L N formula} and multiply \eqref{Pierce expansion} by $N$. Regarding the difference between these two particular quantities, what we refer to as {\em Shallit's law of leap years} (\cite[Theorem 3]{Sha94}) states that
\begin{align} \label{law of leap years}
\begin{aligned}
&\limsup_{N \to \infty} \frac{N x - L((d_k(x))_{k \in \mathbb{N}}, N)}{\sqrt{\log N}} = \frac{1}{\sqrt{2}}
\quad \text{and} \\
&\hspace{4cm} \liminf_{N \to \infty} \frac{N x - L((d_k(x))_{k \in \mathbb{N}}, N)}{\sqrt{\log N}} = -\frac{1}{\sqrt{2}}
\end{aligned}
\end{align}
for Lebesgue-almost every $x \in [0,1]$. Based on this law, he concluded the paper by pointing out that the calendar adopting the generalized rule, with its intercalation sequence given by the Pierce expansion digit sequence, aligns well with the actual dates over the long term. This is because, although $\sqrt{(\log N)/2}$ grows without bound as $N$ tends to infinity, its growth rate is relatively slow.

In this paper, our main focus is on the denseness and Hausdorff dimension of the set of exceptions to Shallit's law of leap years. For our investigation, we shall use the following first main result of this paper, which pertains to a set satisfying a certain property described in terms of Pierce expansions. We say that a subset $F$ of $[0,1]$ is {\em finite replacement-invariant} if the implication
\[
x \in F \implies \langle \tau_1, \dotsc, \tau_n, d_{n+1}(x), d_{n+2}(x), \dotsc \rangle_P \in F
\]
holds for any $n \in \mathbb{N}$ and finite sequence $(\tau_k)_{k=1}^n \in (\mathbb{N} \cup \{ \infty \})^n$ such that the expression
\[
\langle \tau_1, \dotsc, \tau_n, d_{n+1}(x), d_{n+2}(x), \dotsc \rangle_P
\]
is well-defined, i.e., there exists an $y \in [0,1]$ such that
\[
d_k (y) = 
\begin{cases}
\tau_k, &\text{if } k \in \{ 1, \dotsc, n \}; \\
d_k(x), &\text{otherwise}.
\end{cases}
\]

\begin{theorem} \label{intersection with open set theorem}
Let $F$ be a finite replacement-invariant subset of $[0,1]$. Then, the following hold.
\begin{enumerate}[label=\upshape(\roman*), ref=\roman*, leftmargin=*, widest=ii]
\item \label{intersection with open set theorem 1}
If $F$ is uncountable, then $F$ is dense in $[0,1]$. In particular, if $F$ has non-zero Hausdorff dimension, then $F$ is dense in $[0,1]$.
\item \label{intersection with open set theorem 2}
For any non-empty open subset $U$ of $[0,1]$, the intersection of $U$ with $F$ has the same Hausdorff dimension as $F$.
\end{enumerate}
\end{theorem}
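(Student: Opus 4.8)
The plan is to reduce both parts to a single construction: inside a prescribed cylinder set we produce a dimension-controlled family of points of $F$ by replacing finitely many leading Pierce digits of points already known to lie in $F$. We use the basic facts from Section~\ref{Preliminaries}: for a strictly increasing positive-integer string $a_1<\dots<a_n$, the cylinder $I_n(a_1,\dots,a_n)=\{x\in[0,1] : d_k(x)=a_k \text{ for } 1\le k\le n\}$ is an interval of length $\tfrac{1}{a_1\cdots a_{n-1}a_n(a_n+1)}$; every digit satisfies $d_k(x)\ge k$; the map $T^n$ is affine with non-zero slope on each level-$n$ cylinder; and prepending a digit $a$ is realised by the affine map $w\mapsto(1-w)/a$, admissible exactly when $a<d_1(w)$. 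Two reductions follow at once. First, every non-empty open $U\subseteq[0,1]$ contains a cylinder $C=I_n(b_1,\dots,b_n)$ with $b_1<\dots<b_n$ (shrink cylinders around an irrational point of $U$), so it is enough to treat such a $C$: for part (ii), show $\hdim(C\cap F)=\hdim F$; for part (i), show $C\cap F\ne\varnothing$. Second, if $F$ contains a rational with $r$ finite digits, then replacing its first $s$ digits, for $s>r$, by an arbitrary valid string shows via finite replacement-invariance that $F$ contains every rational whose Pierce expansion has more than $r$ digits---a dense set, since appending arbitrarily large digits to any rational expansion keeps close to it. Hence $F$ is dense in that case, and in any case the countably many rational points of $F$ never affect Hausdorff dimension; so for part (i) we may assume $F$ contains no rational.

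The crucial observation is that reaching $C$ from a point $x\in F$ by a finite replacement forces us to retain a tail $(d_{j+1}(x),d_{j+2}(x),\dots)$ of $x$, hence to interpolate $j-n$ strictly increasing digits between $b_n$ and $d_{j+1}(x)$, which is possible exactly when $d_{j+1}(x)\ge b_n+(j-n)+1$. Accordingly put $m:=b_n-n\ge 0$ and
\[
E_m := \{x\in[0,1] : d_k(x)\le k+m-1 \text{ for all } k\}, \qquad G_j := \{x\in F : d_{j+1}(x)\ge j+m+1\}.
\]
On $E_m$ one has $k\le d_k(x)\le k+m-1$, so at most $m^n$ level-$n$ cylinders meet $E_m$, each of diameter at most $\tfrac{1}{(n-1)!}$; since $m^n\big((n-1)!\big)^{-s}\to0$ for every $s>0$, these covers give $\mathcal{H}^s(E_m)=0$ for all $s>0$, so $\hdim E_m=0$ (and $E_0=\varnothing$). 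As $d_{j+1}(x)-j$ is non-decreasing in $j$, the sets $G_j$ increase and $\bigcup_j G_j=F\setminus E_m$; since also $\hdim(F\cap E_m)\le\hdim E_m=0$,
\[
\hdim F = \hdim(F\setminus E_m) = \sup_{j\ge n}\hdim G_j .
\]

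For each $j\ge n$ define $\Phi_j$ on the irrational points of $G_j$ by replacing the first $j$ Pierce digits of $x$ with the fixed block $(b_1,\dots,b_n,b_n+1,\dots,b_n+(j-n))$ of length $j$, and keeping $d_{j+1}(x),d_{j+2}(x),\dots$. Since $x\in G_j$ forces $d_{j+1}(x)\ge b_n+(j-n)+1$, this block followed by the old tail is a legitimate Pierce digit sequence; thus $\Phi_j(x)$ is well defined, it lies in $C$ (its first $n$ digits are $b_1,\dots,b_n$), and it lies in $F$ by finite replacement-invariance. Writing $\Phi_j=\Psi\circ T^j$, where $\Psi$ is the composition of the affine prepending maps for the block---an affine map with non-zero slope $\pm(b_1\cdots b_n(b_n+1)\cdots(b_n+(j-n)))^{-1}$---we see that on each level-$j$ cylinder $\Phi_j$ is affine with non-zero slope, hence bi-Lipschitz there. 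By bi-Lipschitz invariance and countable stability of Hausdorff dimension, $\hdim\Phi_j(G_j)=\hdim G_j$ (the countably many rational points of $G_j$ being negligible), while $\Phi_j(G_j)\subseteq C\cap F$. Since this holds for every $j\ge n$, $\hdim(C\cap F)\ge\sup_{j\ge n}\hdim G_j=\hdim F$; the reverse inequality is trivial, so $\hdim(U\cap F)\ge\hdim(C\cap F)=\hdim F\ge\hdim(U\cap F)$, giving part (ii). For part (i), if $\hdim F>0$ then $\hdim F>0=\hdim E_m$, so $F\setminus E_m=\bigcup_jG_j\ne\varnothing$ and hence $G_j\ne\varnothing$ for all large $j$; picking $x\in G_j$ with $j\ge n$---necessarily irrational, as $F$ has no rational---gives $\Phi_j(x)\in C\cap F\subseteq U\cap F$. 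As $U$ was an arbitrary non-empty open set, $F$ is dense.

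The main obstacle is the middle paragraph: recognising that finite replacement-invariance compels one to preserve a tail of an existing point, isolating the exact arithmetic condition ``$d_{j+1}(x)\ge b_n+(j-n)+1$'' this forces, and then checking that the exceptional set $E_m$ on which it fails is Hausdorff-negligible, so that discarding $E_m$ costs no dimension. Everything else is routine: $T^j$ is expanding, so $\Phi_j$ is only piecewise affine, but decomposing along level-$j$ cylinders together with countable stability of Hausdorff dimension takes care of this, and the rational points of $F$ need only the short separate argument indicated above.
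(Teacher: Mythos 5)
Your proposal is correct, and although it runs on the same basic mechanisms as the paper---replacing leading digits via the affine prepending maps of Proposition~\ref{shift of digits proposition}, bi-Lipschitz invariance plus countable stability, and the fact that the set of points with $d_k(x)\le k+c$ for all large $k$ is Hausdorff-negligible---its organization is genuinely different. The paper proves part (\ref{intersection with open set theorem 1}) by contradiction (if some fundamental interval missed $F\cap\mathbb{I}$, then $F\cap\mathbb{I}$ would be contained in the countable set $Z_c^{(M)}$ of Lemma~\ref{Z c is countable lemma}), and proves part (\ref{intersection with open set theorem 2}) by an induction along the digits of the target cylinder, the key step being the neighbour-swap Claim $\hdim(I_{\sigma^{(m)}(j)}\cap E)=\hdim(I_{\sigma^{(m)}(j+1)}\cap E)$, which itself needs a two-sided bi-Lipschitz comparison and a further decomposition indexed by where the digits become large. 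You instead fix the target cylinder $C=I_{(b_1,\dots,b_n)}$ once, filter $F$ by the increasing sets $G_j$ on which $d_{j+1}(x)\ge j+m+1$ (exactly the admissibility condition for interpolating the increasing block), and map each $G_j$ directly into $C\cap F$ by the single piecewise-affine replacement $\Phi_j=\Psi\circ T^j$; this yields both parts simultaneously---one point of $C\cap F$ for denseness, and $\hdim(C\cap F)\ge\sup_j\hdim G_j=\hdim F$ for the dimension statement---avoiding both the contradiction argument and the telescoping induction. Two further deviations are harmless: you prove $\hdim E_m=0$ by a direct covering count (at most $m^n$ level-$n$ cylinders of diameter at most $1/(n-1)!$), which is weaker than but just as serviceable as the paper's countability result, and you treat rational points of $F$ by a separate density observation plus negligibility, where the paper simply passes to $E=F\cap\mathbb{I}$ at the outset. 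The trade-off is that your route is shorter and constructive, at the cost of a little extra bookkeeping (the level-$j$ cylinder decomposition making $\Phi_j$ only piecewise bi-Lipschitz, and the rational case), while the paper's formulation isolates reusable statements such as Lemma~\ref{I sigma and open set equivalence lemma}; your tersely stated steps (density of rationals with more than $r$ digits, $\hdim G_j=\hdim(G_j\cap\mathbb{I})$) are easily completed and hide no gaps.
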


\begin{remarks}
\begin{enumerate}[label=\upshape(\arabic*), ref=\arabic*, leftmargin=*, widest=2]
\item
Not all dense subsets of $[0,1]$ are finite replacement-invariant. For instance, the set $F \coloneqq [0,1] \setminus \{ \langle 2, 3, 4, \dotsc \rangle_P \}$ is dense in $[0,1]$ but not finite replacement-invariant, as $\langle 1, 3, 4, \dotsc \rangle_P \in F$ but $\langle 2, 3, 4, \dotsc, \rangle_P \not \in F$.
\item
In general, the fact that a subset $F$ of $[0,1]$ has non-zero Hausdorff dimension does not imply that $F$ is dense in $[0,1]$. For example, the interval $[0,1/2]$ has Hausdorff dimension $1$, but it is not dense in $[0,1]$.
\end{enumerate}
\end{remarks}

As an application of Theorem \ref{intersection with open set theorem}, we examine the set, the Hausdorff dimension of which was determined in \cite{Ahn24}, defined as follows. For each $\alpha \in [0, \infty]$, let
\begin{align} \label{definition of A alpha}
A (\alpha) \coloneqq \left\{ x \in [0,1] : \lim_{n \to \infty} \frac{\log d_n(x)}{n} = \alpha \right\}.
\end{align}
The second part of the following corollary generalizes \cite[Corollary 1.3]{Ahn24} (see Proposition \ref{hdim A alpha} below), and the corollary will play one of the two main roles in proving Theorem \ref{leap year theorem}.

\begin{corollary} \label{LLN corollary}
For the sets $A(\alpha)$, the following hold for each $\alpha \in [0, \infty]$.
\begin{enumerate}[label=\upshape(\roman*), ref=\roman*, leftmargin=*, widest=ii]
\item \label{LLN corollary 1}
$A(\alpha)$ is dense in $[0,1]$.
\item \label{LLN corollary 2}
For any non-empty open subset $U$ of $[0,1]$, the set $U \cap A(\alpha)$ has Hausdorff dimension $1$.
\end{enumerate}
\end{corollary}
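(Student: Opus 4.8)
The plan is to obtain Corollary~\ref{LLN corollary} as a direct application of Theorem~\ref{intersection with open set theorem}: I would check that each set $A(\alpha)$ is finite replacement-invariant, and then quote the already-known value of its Hausdorff dimension.

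First I would verify that $A(\alpha)$ is finite replacement-invariant for every $\alpha \in [0, \infty]$. Fix $x \in A(\alpha)$, let $n \in \mathbb{N}$ and $(\tau_k)_{k=1}^n \in (\mathbb{N} \cup \{ \infty \})^n$ be such that $y \coloneqq \langle \tau_1, \dotsc, \tau_n, d_{n+1}(x), d_{n+2}(x), \dotsc \rangle_P$ is well-defined. By the definition of well-definedness, $d_k(y) = d_k(x)$ for all $k > n$, so the digit sequences $(d_k(x))_{k \in \mathbb{N}}$ and $(d_k(y))_{k \in \mathbb{N}}$ share a common tail. Since $\lim_{m \to \infty} \frac{\log d_m(\cdot)}{m}$ is a tail quantity — altering only the first $n$ digits cannot affect a limit taken as $m \to \infty$ — it follows that $\lim_{m \to \infty} \frac{\log d_m(y)}{m} = \lim_{m \to \infty} \frac{\log d_m(x)}{m} = \alpha$, that is, $y \in A(\alpha)$. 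The possibility that some digit equals $\infty$, corresponding to a rational point, is harmless: with the convention $\log \infty = \infty$, every rational of $[0,1]$ lies in $A(\infty)$ (and in no $A(\alpha)$ with $\alpha < \infty$), and one checks directly that any admissible finite replacement of a rational again produces a rational, so this membership too depends only on the tail. Hence $A(\alpha)$ is finite replacement-invariant.

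Next I would invoke Proposition~\ref{hdim A alpha}, which records \cite[Corollary~1.3]{Ahn23b} and asserts $\hdim A(\alpha) = 1$ for every $\alpha \in [0, \infty]$; in particular $A(\alpha)$ has non-zero Hausdorff dimension. Part~(\ref{LLN corollary 1}) is then immediate from Theorem~\ref{intersection with open set theorem}(\ref{intersection with open set theorem 1}), and part~(\ref{LLN corollary 2}) follows from Theorem~\ref{intersection with open set theorem}(\ref{intersection with open set theorem 2}), since $\hdim (U \cap A(\alpha)) = \hdim A(\alpha) = 1$ for any non-empty open $U \subseteq [0,1]$.

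I do not anticipate a serious obstacle here: the mathematical weight lies entirely in Theorem~\ref{intersection with open set theorem} and in the dimension computation of \cite{Ahn23b}, and what remains is the short structural check above. The only point demanding a little care is the bookkeeping around $\infty$-valued digits when confirming finite replacement-invariance, together with the fact that the statement is non-vacuous (indeed sharp) even for the degenerate parameters $\alpha \in \{0, \infty\}$ — a point settled precisely by Proposition~\ref{hdim A alpha}.
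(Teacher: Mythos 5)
Your proposal is correct and follows essentially the same route as the paper: the paper likewise notes that finite replacement-invariance of $A(\alpha)$ is immediate from the definition (the limit $\lim_{m\to\infty}(\log d_m(x))/m$ depends only on the digit tail), recalls $\hdim A(\alpha)=1>0$ from Proposition~\ref{hdim A alpha}, and then applies Theorem~\ref{intersection with open set theorem}(\ref{intersection with open set theorem 1}) and (\ref{intersection with open set theorem 2}) for the two parts. Your extra bookkeeping about $\infty$-valued digits and rational points is harmless and consistent with the paper's conventions, just spelled out more explicitly than the paper bothers to do.
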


Now, we are in a position to state the second main result, which is concerned with certain exceptional sets to Shallit's law of leap years. Specifically, we consider the set
\begin{align} \label{definition of S beta}
\begin{aligned}
&S (\alpha) \coloneqq \left\{ x \in [0,1] : \limsup_{N \to \infty} \frac{Nx - L((d_k(x))_{k \in \mathbb{N}}, N)}{\sqrt{\log N}} = \frac{1}{\sqrt{2\alpha}} \right. \\
&\hspace{5cm} \left. \text{ and } \liminf_{N \to \infty} \frac{Nx - L((d_k(x))_{k \in \mathbb{N}}, N)}{\sqrt{\log N}} = - \frac{1}{\sqrt{2\alpha}} \right\}
\end{aligned}
\end{align}
for each $\alpha \in [0, \infty]$, where we adopt the conventions $1/0 = \infty$ and $1/\infty = 0$.

\begin{theorem} \label{leap year theorem}
For the sets $S(\alpha)$, the following hold for each $\alpha \in [0, \infty]$.
\begin{enumerate}[label=\upshape(\roman*), ref=\roman*, leftmargin=*, widest=ii]
\item \label{leap year theorem 1}
$S(\alpha)$ is dense in $[0,1]$.
\item \label{leap year theorem 2}
For any non-empty open subset $U$ of $[0,1]$, the set $U \cap S(\alpha)$ has Hausdorff dimension $1$.
\end{enumerate}
\end{theorem}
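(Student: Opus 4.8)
The plan is to establish the single inclusion $A(\alpha) \subseteq S(\alpha)$ for every $\alpha \in [0,\infty]$; Theorem \ref{leap year theorem} then follows at once from Corollary \ref{LLN corollary}. Indeed, granting the inclusion, $S(\alpha) \supseteq A(\alpha)$ is dense by Corollary \ref{LLN corollary}(\ref{LLN corollary 1}), which is part~(\ref{leap year theorem 1}); and for any non-empty open $U \subseteq [0,1]$ we get $1 \geq \hdim (U \cap S(\alpha)) \geq \hdim (U \cap A(\alpha)) = 1$ by monotonicity of Hausdorff dimension together with Corollary \ref{LLN corollary}(\ref{LLN corollary 2}), which is part~(\ref{leap year theorem 2}). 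So everything reduces to the inclusion, which is really a form of Shallit's law \eqref{law of leap years} that keeps track of the growth rate of the digits. Write $Q_0(x) \coloneqq 1$ and $Q_k(x) \coloneqq d_1(x) \dotsm d_k(x)$ for $k \geq 1$, and let $\{ y \} \coloneqq y - \lfloor y \rfloor$. Multiplying \eqref{Pierce expansion} by $N$ and subtracting \eqref{L N formula} gives, for every $x \in [0,1]$ and $N \in \mathbb{N}$, the identity
\[
Nx - L((d_k(x))_{k \in \mathbb{N}}, N) = \sum_{k \geq 1} (-1)^{k+1} \left\{ \frac{N}{Q_k(x)} \right\} ,
\]
the series converging because its terms with $Q_k(x) > N$ equal $N/Q_k(x)$.

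Let $n_0 = n_0(x,N) \coloneqq \# \{ k \geq 1 : Q_k(x) \leq N \}$. Splitting the series at $k = n_0$, the head is a sum of at most $n_0$ terms lying in $[0,1)$, while the tail $\sum_{k > n_0} (-1)^{k+1} N/Q_k(x)$ is alternating with strictly decreasing terms and hence of absolute value $< N/Q_{n_0+1}(x) < 1$. This yields the a priori two-sided bound $|Nx - L((d_k(x))_{k \in \mathbb{N}}, N)| \leq \tfrac{1}{2} n_0(x,N) + 2$, which already gives the ``$\leq$'' half of the extremal values once the growth of $n_0(x,N)$ is known.

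Next I would show this bound is asymptotically attained along well-chosen $N$. For irrational $x$ and large $m$, set $N_m \coloneqq \sum_{1 \leq k \leq m,\ k \text{ odd}} (d_k(x) - 1) Q_{k-1}(x)$. A direct check gives $Q_{m-1}(x) \leq N_m < Q_m(x)$, so $n_0(x,N_m) = m - 1$, and for $1 \leq k \leq m-1$ one has $\{ N_m / Q_k(x) \} = \sum_{1 \leq j \leq k,\ j \text{ odd}} (d_j(x) - 1)/(d_j(x) \dotsm d_k(x))$, which, since $d_k(x) \geq k$ for Pierce digits, equals $1 - O(1/d_k(x))$ when $k$ is odd and $O(1/d_k(x))$ when $k$ is even. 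As $\sum_{k \leq m} 1/d_k(x) = O(\log m)$, summing with alternating signs and adding the $O(1)$ tail yields $N_m x - L((d_k(x))_{k \in \mathbb{N}}, N_m) = \tfrac{1}{2} n_0(x,N_m) + o(n_0(x,N_m))$; the symmetric choice, with the digit $d_k(x)-1$ placed in the even positions, gives a sequence $N_m'$ realizing $-\tfrac{1}{2} n_0(x,N_m') + o(n_0(x,N_m'))$. Both $N_m, N_m' \to \infty$.

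Finally I would feed in the hypothesis $x \in A(\alpha)$. From $\log d_k(x) = \alpha k + o(k)$ one gets $\log Q_n(x) = \tfrac{\alpha}{2} n^2 (1 + o(1))$ when $0 < \alpha < \infty$, $\log Q_n(x) \geq \tfrac{C}{2} n^2 - O_C(1)$ for every $C > 0$ when $\alpha = \infty$, and $\log Q_n(x) = o(n^2)$ when $\alpha = 0$; combined with $Q_{n_0}(x) \leq N < Q_{n_0+1}(x)$ (and $n_0(x,N) \to \infty$ when $\alpha < \infty$), this translates into
\[
n_0(x,N) = \sqrt{\tfrac{2\log N}{\alpha}}\,(1+o(1)) \;\; (0 < \alpha < \infty), \qquad \frac{n_0(x,N)}{\sqrt{\log N}} \to 0 \;\; (\alpha = \infty), \qquad \frac{n_0(x,N)}{\sqrt{\log N}} \to \infty \;\; (\alpha = 0).
\]
Plugging these into the a priori upper bound and into the values along $N_m, N_m'$: for $0 < \alpha < \infty$ both the $\limsup$ and the $\liminf$ of $(Nx - L)/\sqrt{\log N}$ get squeezed to $\pm \tfrac{1}{2}\sqrt{2/\alpha} = \pm 1/\sqrt{2\alpha}$; for $\alpha = \infty$ they are squeezed to $0 = \pm 1/\sqrt{2\cdot\infty}$; and for $\alpha = 0$ the sequences force them to $\pm\infty = \pm 1/\sqrt{2\cdot 0}$, matching \eqref{definition of S beta} under the conventions $1/0 = \infty$, $1/\infty = 0$. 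Hence $x \in S(\alpha)$ in every case, so $A(\alpha) \subseteq S(\alpha)$. I expect the main obstacle to be this third step — verifying that the explicit $N_m$, $N_m'$ really attain the extreme value $\approx \tfrac{1}{2} n_0$, which requires careful control of the finitely many small-index terms (where $d_k(x)$ is not yet large) and of the tail beyond $n_0$; once that estimate and the $n_0(x,N)$-asymptotics are in place, matching the degenerate endpoints $\alpha \in \{0, \infty\}$ to the conventions is the only remaining subtlety and the rest is routine bookkeeping.
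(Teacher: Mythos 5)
Your reduction of the theorem is exactly the paper's: both parts follow from the single inclusion $A(\alpha) \subseteq S(\alpha)$ combined with Corollary \ref{LLN corollary} and monotonicity of Hausdorff dimension, so at the level of Theorem \ref{leap year theorem} itself your argument coincides with the paper's proof. The difference lies in how the inclusion is justified. The paper isolates it as Lemma \ref{A alpha subset S beta lemma} and largely delegates it to Shallit: for $\alpha\in(0,\infty]$ it asserts that Shallit's proof of \eqref{law of leap years} goes through after adjusting a parameter, using Lemmas \ref{growth rate of log product lemma} and \ref{sum of reciprocal lemma}, and for $\alpha=0$ it invokes Shallit's lower bound $N_{2r+1}x - L \geq r/4$ along the sequence $N_j = -1 + d_1 - d_1d_2 + \dotsb$, which yields $\limsup \geq 1/(4\sqrt{2\varepsilon})$ and then $+\infty$ as $\varepsilon\to 0^+$. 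You instead reprove the inclusion self-containedly from the fractional-part identity $Nx - L = \sum_k (-1)^{k+1}\{N/Q_k(x)\}$, with the a priori bound $|Nx-L| \leq \tfrac12 n_0 + O(1)$, explicit extremal sequences $N_m, N_m'$ attaining $\pm\tfrac12 n_0 + o(n_0)$, and the translation of $x\in A(\alpha)$ into $n_0(x,N)$-asymptotics via Lemma \ref{growth rate of log product lemma}; this is a faithful reconstruction (and sharpening, in the $\alpha=0$ case, where your sequences give the full $n_0/2$ rather than Shallit's $r/4$) of the method behind \eqref{law of leap years}, and it buys a single uniform treatment of all $\alpha\in[0,\infty]$ at the cost of the bookkeeping you already flag. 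I see no genuine gap, only routine details to pin down: your $N_m$ (resp.\ $N_m'$) should be taken with $m$ odd (resp.\ even), since otherwise $N_m=N_{m-1}$ and $n_0(x,N_m)=m-2$; the bound $\sum_{k\le m}1/d_k(x)=O(\log m)$ rests on $d_k(x)\ge k$, valid for irrational $x$, which covers $A(0)$ and $A(\alpha)$ for finite $\alpha$ (where in fact the series converges by Lemma \ref{sum of reciprocal lemma}); and $A(\infty)$ contains the rationals, for which $n_0(x,N)$ is eventually constant, so there your a priori bound alone gives the required limit $0$ and the extremal sequences are not needed.
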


\begin{corollary} \label{leap year corollary}
The set of exceptions to \eqref{law of leap years} is dense in $[0,1]$ and has Hausdorff dimension $1$.
\end{corollary}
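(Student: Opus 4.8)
The plan is to obtain Corollary \ref{leap year corollary} as an immediate consequence of Theorem \ref{leap year theorem}, by choosing the parameter $\alpha$ away from the value $1$ so that the whole set $S(\alpha)$ lands inside the exceptional set. Concretely, let $E \subseteq [0,1]$ denote the set of points $x$ that fail at least one of the two equalities in \eqref{law of leap years}, and fix any $\alpha \in [0,\infty] \setminus \{1\}$; for definiteness take $\alpha = 2$, so that $1/\sqrt{2\alpha} = 1/2$. First I would check that $S(2) \subseteq E$: if $x \in S(2)$, then by the definition \eqref{definition of S beta},
\[
\limsup_{N \to \infty} \frac{Nx - L((d_k(x))_{k \in \mathbb{N}}, N)}{\sqrt{\log N}} = \frac{1}{2} \neq \frac{1}{\sqrt{2}},
\]
so the first equality in \eqref{law of leap years} fails at $x$, i.e.\ $x \in E$.

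Next I would invoke Theorem \ref{leap year theorem} with this $\alpha$. Part \eqref{leap year theorem 1} gives that $S(2)$ is dense in $[0,1]$, and hence so is its superset $E$. For the Hausdorff dimension, part \eqref{leap year theorem 2} applied to the non-empty open set $U = (0,1)$ yields $\hdim(U \cap S(2)) = 1$; since $U \cap S(2) \subseteq S(2) \subseteq E \subseteq [0,1]$, monotonicity of Hausdorff dimension forces $\hdim E = 1$. This completes the argument.

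I do not anticipate any real obstacle here: everything of substance is already packaged into Theorem \ref{leap year theorem}, and the corollary is a one-line deduction from it. The only point requiring (minimal) care is that the selected parameter must differ from $1$, since $S(1)$ is precisely the set of points that \emph{satisfy} \eqref{law of leap years} rather than violate it; any other value of $\alpha$ serves equally well, for instance $\alpha = \infty$ (for which $S(\infty)$ forces the relevant limit to be $0$) or $\alpha = 0$ (for which the limsup and liminf are $\pm\infty$).
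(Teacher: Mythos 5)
Your proposal is correct and follows essentially the same route as the paper: fix $\alpha \in [0,\infty] \setminus \{1\}$, observe $S(\alpha)$ is contained in the exceptional set, and then apply Theorem \ref{leap year theorem} together with monotonicity of Hausdorff dimension. The only cosmetic difference is that the paper argues for an arbitrary $\alpha \neq 1$ rather than specializing to $\alpha = 2$, which changes nothing of substance.
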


Our results suggest that, for any given non-empty open subset of $[0,1]$, while the generalized rule, using the Pierce expansion digit sequence as the intercalation sequence, works well for most points in the Lebesgue measure sense (by the law of leap years), considering the Hausdorff dimension reveals that there are still plenty of points where the difference $|Nx-L((d_k(x))_{k \in \mathbb{N}}, N)|$ grows quite rapidly as $N$ increases. In particular, no matter how short the interval approximating the value of $\eta$ in \eqref{definition of eta}---such as $(0.2,0.3)$ or $(0.24, 0.25)$, both containing $0.242189$---there are substantial possibilities, in the Hausdorff dimension sense, for the calendar to deviate significantly from the actual date.

This paper is organized as follows. In Section \ref{Preliminaries}, we present some facts and results on the Hausdorff dimension and Pierce expansions. Section \ref{Auxiliary results} contains useful lemmas that will be employed in establishing the main results. Finally, we prove the main results in Section \ref{Proofs of main results}.

Throughout the paper, the closed unit interval $[0,1]$ will be endowed with the usual subspace topology inherited from $\mathbb{R}$. For a subset $F$ of $[0,1]$, we denote by $\hdim F$ its Hausdorff dimension. The set of positive integers will be denoted by $\mathbb{N}$, the set of extended positive integers by $\mathbb{N}_\infty \coloneqq \mathbb{N} \cup \{ \infty \}$, and the set of irrational numbers in $[0,1]$ by $\mathbb{I} \coloneqq [0,1] \setminus \mathbb{Q}$. Following the convention, we define $c \cdot \infty \coloneqq \infty$, $\infty^c \coloneqq \infty$, and $c / 0 \coloneqq \infty$ for any $c \in (0, \infty)$, and $c/\infty \coloneqq 0$ and $\infty \pm c \coloneqq \infty$ for any $c \in \mathbb{R}$.

%Section 2 Preliminaries
\section{Preliminaries} \label{Preliminaries}

This section is dedicated to presenting some facts and results on the Hausdorff dimension and Pierce expansions.

We first provide the definition of the Hausdorff dimension.

\begin{definition} [See {\cite[Chapter 3]{Fal14}}] \label{hdim definition}
For a subset $F$ of $[0,1]$, the {\em Hausdorff dimension} of $F$ is defined by
\[
\hdim F \coloneqq \inf \{ s \geq 0 : \mathcal{H}^s (F) = 0 \} = \sup \{ s : \mathcal{H}^s (F) = \infty \},
\]
considering the supremum of the empty set to be $0$. Here, $\mathcal{H}^s(F)$, for $s \in [0, \infty)$, is the {\em $s$-dimensional Hausdorff measure} given by
\[
\mathcal{H}^s (F) \coloneqq \lim_{\delta \to 0} \left( \inf \left\{ \sum_{k \in \mathbb{N}} |U_k|^s : F \subseteq \bigcup_{k \in \mathbb{N}} U_k \text{ and } |U_k| \in (0, \delta] \text{ for each } k \in \mathbb{N} \right\} \right),
\]
where $| \cdot |$ denotes the diameter of a set.
\end{definition}

{\em Monotonicity} and {\em countable stability} are two important properties of the Hausdorff dimension that we will frequently utilize.

\begin{proposition} [See {\cite[pp.~48--49]{Fal14}}] \label{monotonicity and countable stability}
For any subsets $E$ and $F$ of $[0,1]$, the following hold.
\begin{enumerate}[label=\upshape(\roman*), ref=\roman*, leftmargin=*, widest=ii]
\item \label{monotonicity and countable stability 1}
If $E \subseteq F$, then $\hdim E \leq \hdim F$.
\item \label{monotonicity and countable stability 2}
If $F = \bigcup_{k \in \mathbb{N}} F_k$, then $\hdim F = \sup_{k \in \mathbb{N}} \{ \hdim F_k \}$.
\end{enumerate}
\end{proposition}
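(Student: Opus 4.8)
The plan is to argue directly from Definition \ref{hdim definition}, first establishing the two analogous properties at the level of the $s$-dimensional Hausdorff measure $\mathcal{H}^s$ and then transferring them to the dimension through its infimum characterization. Throughout, for $s \geq 0$ and $\delta > 0$, I would write $\mathcal{H}^s_\delta(\cdot)$ for the quantity inside the limit in Definition \ref{hdim definition}, so that $\mathcal{H}^s = \lim_{\delta \to 0^+} \mathcal{H}^s_\delta = \sup_{\delta > 0} \mathcal{H}^s_\delta$, the limit being monotone because shrinking $\delta$ restricts the admissible covers and hence can only raise the infimum.

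For part (\ref{monotonicity and countable stability 1}), I would note that whenever $E \subseteq F$, every countable cover of $F$ by sets of diameter in $(0, \delta]$ is also such a cover of $E$; hence the infimum defining $\mathcal{H}^s_\delta(E)$ ranges over a collection at least as large as the one for $\mathcal{H}^s_\delta(F)$, giving $\mathcal{H}^s_\delta(E) \leq \mathcal{H}^s_\delta(F)$, and therefore $\mathcal{H}^s(E) \leq \mathcal{H}^s(F)$ after letting $\delta \to 0$. In particular $\mathcal{H}^s(F) = 0$ forces $\mathcal{H}^s(E) = 0$, so $\{ s \geq 0 : \mathcal{H}^s(F) = 0 \} \subseteq \{ s \geq 0 : \mathcal{H}^s(E) = 0 \}$, and taking infima of these two sets yields $\hdim E \leq \hdim F$.

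For part (\ref{monotonicity and countable stability 2}), the inequality $\sup_{k} \hdim F_k \leq \hdim F$ is immediate from part (\ref{monotonicity and countable stability 1}) since each $F_k \subseteq F$. For the reverse inequality I would fix an arbitrary $s$ with $s > \sup_k \hdim F_k$; then $s > \hdim F_k$ for every $k$, so $\mathcal{H}^s(F_k) = 0$ for every $k$ by the infimum characterization. The substantive step is countable subadditivity of $\mathcal{H}^s$: given $\varepsilon > 0$ and $\delta > 0$, for each $k$ pick a countable cover of $F_k$ by sets of diameter in $(0, \delta]$ whose $s$-th powers of diameters sum to less than $\varepsilon 2^{-k}$ (possible because $\mathcal{H}^s_\delta(F_k) \leq \mathcal{H}^s(F_k) = 0$); the union over $k$ of these covers is a countable cover of $F = \bigcup_k F_k$ by sets of diameter in $(0, \delta]$ with total $s$-cost less than $\varepsilon$, so $\mathcal{H}^s_\delta(F) < \varepsilon$. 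Letting $\varepsilon \to 0$ and then $\delta \to 0$ gives $\mathcal{H}^s(F) = 0$, whence $\hdim F \leq s$; as $s > \sup_k \hdim F_k$ was arbitrary, $\hdim F \leq \sup_k \hdim F_k$.

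I do not anticipate a genuine obstacle: the only care needed is the bookkeeping in the subadditivity argument (the chosen covers automatically keep all diameters in $(0,\delta]$, and rearranging the resulting double sum of nonnegative terms is legitimate), together with the trivial degenerate cases (some $F_k$ empty, or $\sup_k \hdim F_k$ already equal to the maximal value $1$ attainable for subsets of $[0,1]$, in which case the upper bound is vacuous). Alternatively, both parts may simply be quoted from \cite[pp.~48--49]{Fal14}.
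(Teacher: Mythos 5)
Your proposal is correct, and it is worth noting that the paper itself offers no proof at all: Proposition \ref{monotonicity and countable stability} is simply quoted from \cite[pp.~48--49]{Fal14}, exactly as your closing sentence anticipates. Your self-contained argument is the standard one from that reference: monotonicity of $\mathcal{H}^s_\delta$ under inclusion of sets gives part (\ref{monotonicity and countable stability 1}), and countable subadditivity of $\mathcal{H}^s$ (via the $\varepsilon 2^{-k}$ cover-pasting trick) gives the nontrivial half of part (\ref{monotonicity and countable stability 2}). The only step you gloss over is the deduction ``$s > \hdim F_k$ implies $\mathcal{H}^s(F_k)=0$'': from the infimum characterization you only get some $s' < s$ with $\mathcal{H}^{s'}(F_k)=0$, and passing from $s'$ to $s$ uses the standard comparison $\mathcal{H}^s_\delta(F_k) \leq \delta^{\,s-s'}\,\mathcal{H}^{s'}_\delta(F_k)$ for $\delta \leq 1$ (i.e.\ the jump behavior of $s \mapsto \mathcal{H}^s$), which is one line but should be said if you are proving the proposition from scratch rather than citing it. With that sentence added, your proof is complete; citing Falconer, as the paper does, is equally acceptable.
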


Another useful property of the Hausdorff dimension is its {\em bi-Lipschitz invariance}.

\begin{proposition} [See {\cite[Proposition 3.3]{Fal14}}] \label{bi-Lipschitz invariance}
Let $F$ be a subset of $[0,1]$. If $g \colon F \to \mathbb{R}$ is a bi-Lipschitz mapping, then $\hdim F = \hdim g(F)$.
\end{proposition}

Now, we introduce a sequence set consisting of certain sequences in $\mathbb{N}_\infty$. This sequence set was discussed in detail in \cite{Ahn23a}, where we studied the error-sum function of Pierce expansions. Let
\[
\Sigma_0 \coloneqq \{ (\sigma_k)_{k \in \mathbb{N}} \in \{ \infty \}^{\mathbb{N}} \} = \{ (\infty, \infty, \dotsc) \}.
\]
For each $n \in \mathbb{N}$, define
\[
\Sigma_n \coloneqq \{ (\sigma_k)_{k \in \mathbb{N}} \in \mathbb{N}^{\{ 1, \dotsc, n \}} \times \{ \infty \}^{\mathbb{N} \setminus \{ 1, \dotsc, n \}} : \sigma_1 < \dotsb < \sigma_n \}.
\]
For brevity, we write $(\sigma_1, \dotsc, \sigma_n)$ for $(\sigma_1, \dotsc, \sigma_n, \infty, \infty, \dotsc) \in \Sigma_n$, provided that the context is clear. For the set of all strictly increasing infinite sequences in $\mathbb{N}$, put
\[
\Sigma_\infty \coloneqq \{ (\sigma_k)_{k \in \mathbb{N}} \in \mathbb{N}^{\mathbb{N}} : \sigma_k < \sigma_{k+1} \text{ for all } k \in \mathbb{N} \}.
\]
Finally, let
\begin{align*}
\Sigma &\coloneqq \Sigma_0 \cup \bigcup_{n \in \mathbb{N}} \Sigma_n \cup \Sigma_\infty, \\
\Sigma' &\coloneqq \Sigma \setminus \{ (\sigma_k)_{k=1}^n \in \Sigma : n \geq 2 \text{ and } \sigma_n = \sigma_{n-1} + 1 \}.
\end{align*}

For each $n \in \mathbb{N}$ and $\sigma \coloneqq (\sigma_k)_{k=1}^n \in \Sigma_n$, we define the {\em cylinder set} associated with $\sigma$ by
\[
\Upsilon_{\sigma} \coloneqq \{ (\tau_k)_{k \in \mathbb{N}} \in \Sigma : \tau_k = \sigma_k \text{ for all } k \in \{ 1, \dotsc, n \} \}.
\]
Similarly, the {\em fundamental interval} associated with $\sigma$ is defined by
\[
I_{\sigma} \coloneqq f^{-1} (\Upsilon_\sigma) = \{ x \in [0,1] : d_k(x) = \sigma_k \text{ for all } k \in \{ 1, \dotsc, n \} \},
\]
i.e., $I_{\sigma}$ consists of numbers in $[0,1]$ whose digit sequence begins with $\sigma_1, \dotsc, \sigma_n$.

For each $x \in [0,1]$, we denote its Pierce expansion digit sequence $(d_k(x))_{k \in \mathbb{N}}$ by $f(x)$, that is, $f \colon [0,1] \to \Sigma$ is a map defined by $x \mapsto (d_k(x))_{k \in \mathbb{N}}$ for each $x \in [0,1]$. Conversely, define a map $\varphi \colon \Sigma \to [0,1]$ by
\[
\varphi (\sigma) \coloneqq \sum_{k \in \mathbb{N}} \frac{(-1)^{k+1}}{\sigma_1 \dotsm \sigma_k}
\]
for each $\sigma \coloneqq (\sigma_k)_{k \in \mathbb{N}} \in \Sigma$. Note that, for any $x \in [0,1]$, we have
\[
(\varphi \circ f)(x) = \sum_{k \in \mathbb{N}} \frac{(-1)^{k+1}}{d_1(x) \dotsm d_k(x)} = \langle d_1(x), d_2(x), \dotsc \rangle_P = x
\]
by definition.

Let $\mathbb{N}_\infty$ denote the one-point compactification of the discrete space $\mathbb{N}$. The product space $\mathbb{N}_\infty^{\mathbb{N}}$ is compact by Tychonoff's theorem. Equip $\Sigma \subseteq \mathbb{N}_\infty^{\mathbb{N}}$ with the subspace topology. With the notations introduced so far, a classical result in Pierce expansions mentioned in Section \ref{Introduction}---the one-to-one correspondence between $\mathbb{I}$ and $\Sigma_\infty$---can be restated and extended as follows.

\begin{proposition} [See {\cite[Section 3.2]{Ahn23a}}] \label{f is homeo}
For the mappings $f \colon [0,1] \to \Sigma$ and $\varphi \colon \Sigma \to [0,1]$, the following hold.
\begin{enumerate}[label=\upshape(\roman*), ref=\roman*, leftmargin=*, widest=ii]
\item \label{f is homeo 1}
$f|_{\mathbb{I}} \colon \mathbb{I} \to \Sigma_\infty$, the restriction of $f$ to $\mathbb{I}$, is a homeomorphism with the continuous inverse $\varphi|_{\Sigma_\infty} : \Sigma_\infty \to \mathbb{I}$, the restriction of $\varphi$ to $\Sigma_\infty$.
\item \label{f is homeo 2}
$f \colon [0,1] \to \Sigma'$ is a bijection with the inverse $\varphi|_{\Sigma'} \colon \Sigma' \to [0,1]$, the restriction of $\varphi$ to $\Sigma'$.
\end{enumerate}
\end{proposition}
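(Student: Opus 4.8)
The plan is to establish the bijection in part~(ii) first, deduce the bijection in part~(i) by restriction, and then add the (mild) topological arguments. I would begin by recording the elementary behaviour of $T$: for $x\in(0,1]$ one has $d_1(x)=\lfloor 1/x\rfloor\ge 1$ and, writing $d\coloneqq d_1(x)$, the inequalities $1/(d+1)<x\le 1/d$ yield $T(x)=1-dx\in[0,1/(d+1))$; in particular $T$ maps $[0,1]$ into $[0,1)$, and whenever $T(x)>0$ one gets $1/T(x)>d+1$, hence $d_1(T(x))\ge d+1$. Iterating $T$ shows the digit sequence $f(x)=(d_k(x))_{k\in\mathbb N}$ is strictly increasing on the initial block where it takes values in $\mathbb N$; since $T$ strictly decreases the numerator of a rational written in lowest terms, that block is finite exactly when $x\in\mathbb Q$, and it is infinite with $f(x)\in\Sigma_\infty$ when $x\in\mathbb I$. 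The one point requiring a little care is that for rational $x$ with exactly $n\ge 2$ finite digits, $T^{n-1}(x)=1/d_n(x)$ lies in $[0,1/(d_{n-1}(x)+1))$ by the same inequality applied to $T^{n-2}(x)$, so $d_n(x)>d_{n-1}(x)+1$, i.e.\ $d_n(x)\ge d_{n-1}(x)+2$; consequently $f$ maps $[0,1]$ into $\Sigma'$.

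Next I would prove $\varphi\circ f=\mathrm{id}_{[0,1]}$ and $f\circ(\varphi|_{\Sigma'})=\mathrm{id}_{\Sigma'}$. The first follows from the truncation identity
\[
x=\sum_{k=1}^{n}\frac{(-1)^{k+1}}{d_1(x)\dotsm d_k(x)}+(-1)^n\,\frac{T^n(x)}{d_1(x)\dotsm d_n(x)},
\]
proved by induction from $x=(1-T(x))/d_1(x)$; since $d_k(x)\ge k$ on the finite block, the error term is at most $1/n!\to 0$, so the Pierce series of $x$ sums to $x$. For the reverse composition, fix $\sigma=(\sigma_k)_{k\in\mathbb N}\in\Sigma'$ with $\sigma_1<\infty$ (the case $\sigma=(\infty,\infty,\dotsc)$ being trivial), set $x\coloneqq\varphi(\sigma)$, and let $\sigma'\coloneqq(\sigma_2,\sigma_3,\dotsc)$ be the shift; factoring $1/\sigma_1$ out of the defining series gives $x=(1-\varphi(\sigma'))/\sigma_1$, whence $T(x)=\varphi(\sigma')$ once $d_1(x)=\sigma_1$ is known. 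The crucial estimate is $0\le\varphi(\sigma')\le 1/\sigma_2\le 1/(\sigma_1+1)$, the last step using $\sigma_2>\sigma_1$; the role of the hypothesis $\sigma\in\Sigma'$ (as opposed to merely $\sigma\in\Sigma$) is to upgrade this to the \emph{strict} inequality $\varphi(\sigma')<1/(\sigma_1+1)$: if $\sigma'$ has at least two finite terms the alternating tail is strictly positive so $\varphi(\sigma')<1/\sigma_2$, whereas if $\sigma'=(\sigma_2)$ then $\varphi(\sigma')=1/\sigma_2$ and strictness is precisely the excluded case $\sigma_2=\sigma_1+1$. Together with $\varphi(\sigma')\ge 0$, this gives $\sigma_1\le 1/x<\sigma_1+1$, i.e.\ $d_1(\varphi(\sigma))=\sigma_1$; since $\sigma'\in\Sigma'$ too, induction using $T(\varphi(\sigma))=\varphi(\sigma')$ yields $d_k(\varphi(\sigma))=\sigma_k$ for all $k$, i.e.\ $f(\varphi(\sigma))=\sigma$. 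Hence $f\colon[0,1]\to\Sigma'$ is a bijection with inverse $\varphi|_{\Sigma'}$, which is part~(ii).

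For part~(i), restricting~(ii) to $\mathbb I$ shows $f|_{\mathbb I}\colon\mathbb I\to\Sigma_\infty$ is a bijection with inverse $\varphi|_{\Sigma_\infty}$, because $x\in\mathbb I$ iff $f(x)$ has infinitely many finite digits iff $f(x)\in\Sigma_\infty$. It then remains to check that both maps are continuous for the product topology on $\Sigma_\infty\subseteq\mathbb N_\infty^{\mathbb N}$. Continuity of $\varphi|_{\Sigma_\infty}$ follows since each partial sum $\sigma\mapsto\sum_{k=1}^n(-1)^{k+1}/(\sigma_1\dotsm\sigma_k)$ depends continuously on the coordinates $\sigma_1,\dotsc,\sigma_n$ and $|\varphi(\sigma)-\sum_{k=1}^n(-1)^{k+1}/(\sigma_1\dotsm\sigma_k)|\le 1/n!$ uniformly on $\Sigma_\infty$, exhibiting $\varphi|_{\Sigma_\infty}$ as a uniform limit of continuous maps. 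Continuity of $f|_{\mathbb I}$ is proved coordinate by coordinate: if $x\in\mathbb I$ then $1/x\notin\mathbb Z$, so $x\mapsto d_1(x)$ is locally constant at $x$; hence for $x_j\to x$ one has $d_1(x_j)=d_1(x)$ eventually, so $T(x_j)=1-d_1(x)x_j\to T(x)$, and $T(x)$ is again irrational, so the argument iterates to give $d_k(x_j)\to d_k(x)$ for every $k$. I expect no real obstacle here; the only genuinely delicate points are the two bookkeeping ones above---locating exactly where the gap condition defining $\Sigma'$ is used, namely the strict inequality $\varphi(\sigma')<1/(\sigma_1+1)$ that forces $\lfloor 1/\varphi(\sigma)\rfloor=\sigma_1$, and its counterpart, the two-step gap $d_n(x)\ge d_{n-1}(x)+2$ for terminating digit sequences---while the rest is the standard uniform-convergence and locally-constant-floor routine.
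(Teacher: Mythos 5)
Your proof is correct: the strictly increasing digit property, the two-step gap $d_n(x)\ge d_{n-1}(x)+2$ for terminating expansions, the truncation identity giving $\varphi\circ f=\mathrm{id}_{[0,1]}$, the strict bound $\varphi(\sigma')<1/(\sigma_1+1)$ (which is exactly where the defining condition of $\Sigma'$ enters) giving $f\circ\varphi|_{\Sigma'}=\mathrm{id}_{\Sigma'}$, and the uniform-convergence and locally-constant-floor arguments for the two continuity claims are all sound; the only quibble is the phrase ``the alternating tail is strictly positive,'' where what is meant (and what your factored form $\varphi(\sigma')=(1-\varphi(\sigma''))/\sigma_2$ actually uses) is that $\varphi(\sigma'')>0$, so the tail \emph{subtracted} from $1/\sigma_2$ is positive. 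Note that the paper itself gives no proof of this proposition---it is quoted from \cite[Section 3.2]{Ahn23a}---so there is no internal argument to compare against; your proof is the standard development of these facts and would serve as a self-contained substitute for the citation.
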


The following proposition precisely describes the fundamental intervals.

\begin{proposition}[See {\cite[Theorem 1]{Sha86}} and {\cite[Proposition 3.5]{Ahn23a}}] \label{I sigma}
Let $n \in \mathbb{N}$ and $\sigma \coloneqq (\sigma_k)_{k=1}^n \in \Sigma_n$. Define $\sigma' \in \Sigma_n$ by
\[
\sigma' \coloneqq (\underbrace{\sigma_1, \dotsc, \sigma_{n-1}}_{\text{$n-1$ terms}}, \sigma_n+1).
\]
Then, $I_{\sigma}$ is a subinterval of $[0,1]$ with endpoints $\varphi (\sigma)$ and $\varphi (\sigma')$; more precisely,
\[
I_{\sigma}  = \begin{cases}
(\varphi (\sigma'), \varphi (\sigma)], &\text{if $n$ is odd}; \\
[\varphi (\sigma), \varphi (\sigma')), &\text{if $n$ is even},
\end{cases}
\quad \text{or} \quad
I_{\sigma}  = \begin{cases}
(\varphi (\sigma'), \varphi (\sigma)), &\text{if $n$ is odd}; \\
(\varphi (\sigma), \varphi (\sigma')), &\text{if $n$ is even},
\end{cases}
\]
according as $\sigma \in \Sigma'$ or $\sigma \not \in \Sigma'$.
\end{proposition}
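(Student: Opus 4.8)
The plan is to recover the fundamental interval $I_\sigma$ from the recursive structure of the Pierce map $T$, following \cite{Sha86, Ahn23a}. The first step is the telescoping identity obtained by iterating $x = (1 - T(x))/d_1(x)$, equivalently $T(x) = 1 - d_1(x)\,x$: for every $x \in I_\sigma$ one has
\[
x = \varphi(\sigma) + \frac{(-1)^n}{\sigma_1 \dotsm \sigma_n}\, T^n(x),
\]
since the first $n$ digits of $x$ are $\sigma_1, \dotsc, \sigma_n$ while the remaining terms defining $\varphi(\sigma)$ vanish. Writing $u_m(t) \coloneqq (1 - t)/m$ for the local inverse of $T$ on $I_{(m)}$ and $\ell_\sigma \coloneqq u_{\sigma_1} \circ \dotsb \circ u_{\sigma_n}$ --- which is the affine bijection $t \mapsto \varphi(\sigma) + (-1)^n (\sigma_1 \dotsm \sigma_n)^{-1} t$ --- the identity reads $x = \ell_\sigma(T^n(x))$ on $I_\sigma$. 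Consequently $I_\sigma = \ell_\sigma(J_\sigma)$, where $J_\sigma \coloneqq \{\, T^n(x) : x \in I_\sigma \,\}$, so it remains to identify $J_\sigma$ and to push it forward through $\ell_\sigma$.

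The second step is to compute $J_\sigma$. The digits of $T^n(x)$ are $d_{n+1}(x), d_{n+2}(x), \dotsc$, and since Pierce digits strictly increase, every $x \in I_\sigma$ satisfies $d_1(T^n(x)) = d_{n+1}(x) \geq \sigma_n + 1$, or else $T^n(x) = 0$; hence $J_\sigma \subseteq [0, 1/(\sigma_n+1))$. Conversely, for $t$ in the open interval $(0, 1/(\sigma_n+1))$ a short check (splitting according to whether the digit string of $t$ is infinite, finite of length at least two, or a single digit $\geq \sigma_n + 2$) shows that prepending $\sigma_1, \dotsc, \sigma_n$ to the digit string of $t$ yields a string in $\Sigma'$, so Proposition \ref{f is homeo} produces an $x \in I_\sigma$ with $T^n(x) = t$, namely $x = \ell_\sigma(t)$. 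The two endpoints need care. One has $0 \in J_\sigma$ exactly when some $x \in I_\sigma$ has all digits after the $n$-th equal to $\infty$, i.e.\ when $\varphi(\sigma)$ has digit string precisely $(\sigma_1, \dotsc, \sigma_n)$, which by Proposition \ref{f is homeo}(\ref{f is homeo 2}) happens if and only if $\sigma \in \Sigma'$. On the other hand $1/(\sigma_n+1) \notin J_\sigma$ always: otherwise there would be a point of $[0,1]$ whose digit string begins $(\sigma_1, \dotsc, \sigma_n, \sigma_n+1)$, which is not in $\Sigma'$ because its last two digits are consecutive --- this is the collapse identity $\langle \dotsc, m, m+1 \rangle_P = \langle \dotsc, m+1 \rangle_P$ --- and in fact $\ell_\sigma(1/(\sigma_n+1)) = \varphi(\sigma')$, whose digit string is $\sigma' \in \Sigma'$ and therefore differs from $\sigma$ in the $n$-th entry. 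Hence $J_\sigma = [0, 1/(\sigma_n+1))$ if $\sigma \in \Sigma'$ and $J_\sigma = (0, 1/(\sigma_n+1))$ if $\sigma \notin \Sigma'$.

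The third step is routine. One computes $\ell_\sigma(0) = \varphi(\sigma)$ and $\ell_\sigma(1/(\sigma_n+1)) = \varphi(\sigma) + (-1)^n (\sigma_1 \dotsm \sigma_n (\sigma_n+1))^{-1} = \varphi(\sigma')$, and $\ell_\sigma$ is increasing for $n$ even and decreasing for $n$ odd. Applying $\ell_\sigma$ to each of the two possibilities for $J_\sigma$, in each parity of $n$, yields exactly the four displayed descriptions of $I_\sigma$, the half-open versus open alternative being governed by whether $0 \in J_\sigma$, i.e.\ by whether $\sigma \in \Sigma'$.

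I expect the genuine obstacle to be the endpoint bookkeeping of the second step: the assertion is ultimately about which of the two endpoints of the interval bounded by $\varphi(\sigma)$ and $\varphi(\sigma')$ actually carries the digit string $\sigma$, and this is precisely where the non-uniqueness recorded by $\Sigma \setminus \Sigma'$ --- equivalently, the identity $\langle \dotsc, m, m+1 \rangle_P = \langle \dotsc, m+1 \rangle_P$ --- must be handled with care, Proposition \ref{f is homeo} being the tool that keeps it clean. The whole argument can alternatively be phrased as an induction on $n$, with base case the elementary computation $I_{(\sigma_1)} = (1/(\sigma_1+1),\, 1/\sigma_1]$ and inductive step obtained by applying the single map $u_{\sigma_1}$ to $I_{(\sigma_2, \dotsc, \sigma_n)}$; the endpoint subtlety appears there in the same guise, so the two routes are essentially equivalent.
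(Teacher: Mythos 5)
Your argument is correct, and it cannot be compared against an internal proof because the paper does not give one: Proposition \ref{I sigma} is imported by citation from Shallit's Theorem 1 and from \cite{Ahn23a}, Proposition 3.5. What you do is the standard argument behind those references, and it is consistent with the machinery the paper itself uses: your $\ell_\sigma$ is exactly the map $g_\sigma$ of Proposition \ref{shift of digits proposition}, the identity $x=\ell_\sigma(T^n(x))$ on $I_\sigma$ is the usual telescoping of $x=(1-T(x))/d_1(x)$, and the endpoint bookkeeping is correctly delegated to Proposition \ref{f is homeo}(\ref{f is homeo 2}): $0\in T^n(I_\sigma)$ iff $\sigma\in\Sigma'$, while $1/(\sigma_n+1)\notin T^n(I_\sigma)$ always, since the only candidate preimage is $\ell_\sigma(1/(\sigma_n+1))=\varphi(\sigma')$, whose digit string is $\sigma'$ and hence has $n$-th digit $\sigma_n+1\neq\sigma_n$. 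One small imprecision: where you say a point with $T^n(x)=1/(\sigma_n+1)$ would have digit string \emph{beginning} with $(\sigma_1,\dotsc,\sigma_n,\sigma_n+1)$, you should say \emph{equal to} that finite string, because $T^{n+1}(x)=0$ forces termination; a string merely beginning this way could well lie in $\Sigma'$. Your follow-up sentence (identifying the point as $\varphi(\sigma')$) already closes this gap, so the proof stands; the concluding image computation under the orientation $(-1)^n$ of $\ell_\sigma$ then yields all four displayed cases.
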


\begin{proposition} [See {\cite[Lemma 3.4 and Remark 3.5]{Ahn24}}] \label{shift of digits proposition}
Let $n \in \mathbb{N}$ and $\sigma \coloneqq (\sigma_k)_{k=1}^n \in \Sigma_n$. Define a linear map $g_\sigma \colon [0,1] \to g_\sigma ([0,1])$ by
\begin{align*}
g_\sigma (x) \coloneqq \varphi (\sigma) +  \frac{(-1)^n}{\sigma_1 \dotsm \sigma_n} x = \frac{1}{\sigma_1} - \frac{1}{\sigma_1 \sigma_2} + \dotsb + \frac{(-1)^{n+1}}{\sigma_1 \dotsm \sigma_n} + \frac{(-1)^n}{\sigma_1 \dotsm \sigma_n} x
\end{align*}
for each $x \in [0,1]$, and denote by $g_\sigma^{-1}$ its inverse map, which is also linear. Then, the following hold.
\begin{enumerate}[label=\upshape(\roman*), ref=\roman*, leftmargin=*, widest=ii]
\item \label{shift of digits proposition 1}
For each $x \in \mathbb{I}$, if $\sigma_n < d_1(x)$, then
\[
g_\sigma(x) = \langle \sigma_1, \dotsc, \sigma_n, d_1(x), d_2(x), \dotsc \rangle_P \in I_\sigma \cap \mathbb{I}.
\]
\item \label{shift of digits proposition 2}
For each $y \in I_\sigma \cap \mathbb{I}$, we have $y \in g_\sigma ([0,1])$ and
\[
g_\sigma^{-1}(y) = \langle d_{n+1}(y), d_{n+2}(y), \dotsc \rangle_P \in \mathbb{I}.
\]
\end{enumerate}
\end{proposition}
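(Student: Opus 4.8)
The plan is to reduce both parts to a single algebraic identity---namely, that prepending the block $\sigma_1, \dotsc, \sigma_n$ to a Pierce expansion is realised exactly by the affine map $g_\sigma$---and then to invoke the classical correspondence of Proposition~\ref{f is homeo}\eqref{f is homeo 1} between $\Sigma_\infty$ and $\mathbb{I}$ to guarantee that the sequences produced along the way are genuine Pierce digit sequences.

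First I would establish this core identity. Fix $x \in \mathbb{I}$, put $(e_j)_{j \in \mathbb{N}} \coloneqq (d_j(x))_{j \in \mathbb{N}} \in \Sigma_\infty$, and suppose $\sigma_n < e_1$. Then $(\sigma_1, \dotsc, \sigma_n, e_1, e_2, \dotsc)$ is strictly increasing, hence lies in $\Sigma_\infty$, so by Proposition~\ref{f is homeo}\eqref{f is homeo 1} it is the digit sequence of the irrational number $y \coloneqq \varphi(\sigma_1, \dotsc, \sigma_n, e_1, e_2, \dotsc)$; since $d_k(y) = \sigma_k$ for $k \in \{1, \dotsc, n\}$, we get $y \in I_\sigma \cap \mathbb{I}$. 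Splitting the series defining $\varphi(\sigma_1, \dotsc, \sigma_n, e_1, e_2, \dotsc)$ after its $n$-th term and pulling the factor $(-1)^n/(\sigma_1 \dotsm \sigma_n)$ out of the tail yields
\[
y = \sum_{k=1}^{n} \frac{(-1)^{k+1}}{\sigma_1 \dotsm \sigma_k} + \frac{(-1)^{n}}{\sigma_1 \dotsm \sigma_n} \sum_{j \in \mathbb{N}} \frac{(-1)^{j+1}}{e_1 \dotsm e_j} = \varphi(\sigma) + \frac{(-1)^{n}}{\sigma_1 \dotsm \sigma_n}\, x = g_\sigma(x),
\]
which is exactly the assertion of part~\eqref{shift of digits proposition 1}.

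For part~\eqref{shift of digits proposition 2} I would run the identity in reverse. Given $y \in I_\sigma \cap \mathbb{I}$, the irrationality of $y$ gives $(d_k(y))_{k \in \mathbb{N}} \in \Sigma_\infty$, and membership in $I_\sigma$ forces $d_k(y) = \sigma_k$ for $k \leq n$, so in particular $\sigma_n = d_n(y) < d_{n+1}(y)$. The tail $(d_{n+1}(y), d_{n+2}(y), \dotsc)$ is again strictly increasing, hence in $\Sigma_\infty$, so by Proposition~\ref{f is homeo}\eqref{f is homeo 1} it is the digit sequence of $z \coloneqq \langle d_{n+1}(y), d_{n+2}(y), \dotsc \rangle_P \in \mathbb{I}$, i.e., $d_j(z) = d_{n+j}(y)$ for all $j \in \mathbb{N}$. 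Applying the core identity with $x$ replaced by $z$---legitimate since $\sigma_n < d_{n+1}(y) = d_1(z)$---gives $g_\sigma(z) = \langle \sigma_1, \dotsc, \sigma_n, d_{n+1}(y), d_{n+2}(y), \dotsc \rangle_P = y$. As $g_\sigma$ is affine with nonzero slope $(-1)^n/(\sigma_1 \dotsm \sigma_n)$, it is a bijection onto $g_\sigma([0,1])$ with affine inverse; hence $y \in g_\sigma([0,1])$ and $g_\sigma^{-1}(y) = z = \langle d_{n+1}(y), d_{n+2}(y), \dotsc \rangle_P \in \mathbb{I}$, as desired.

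I do not expect any genuine obstacle: the entire proof consists of two bookkeeping steps---tracking the parity factor $(-1)^n$ and the index shift when the $\varphi$-series is split, and checking that putting $\sigma_1 < \dotsb < \sigma_n$ in front of (respectively, deleting $\sigma_1, \dotsc, \sigma_n$ from the front of) a strictly increasing $\mathbb{N}$-valued sequence again produces a strictly increasing $\mathbb{N}$-valued sequence, which is precisely where the hypothesis $\sigma_n < d_1(x)$ (respectively, the definition of $I_\sigma$) is used and where Proposition~\ref{f is homeo}\eqref{f is homeo 1} enters. The only point worth stating explicitly is that $g_\sigma^{-1}$ is well defined because $g_\sigma$ is affine with nonzero slope; once that is noted, $y \in g_\sigma([0,1])$ in part~\eqref{shift of digits proposition 2} is automatic, since $z$, being the value of a Pierce expansion, already lies in $[0,1]$.
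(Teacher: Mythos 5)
Your proof is correct: the series-splitting identity $\varphi(\sigma_1, \dotsc, \sigma_n, e_1, e_2, \dotsc) = \varphi(\sigma) + \frac{(-1)^n}{\sigma_1 \dotsm \sigma_n}\,\varphi((e_j)_{j \in \mathbb{N}})$ combined with Proposition~\ref{f is homeo}(\ref{f is homeo 1}) (to certify that the prepended and truncated sequences are genuine digit sequences) is exactly what is needed, and the reduction of part~(\ref{shift of digits proposition 2}) to part~(\ref{shift of digits proposition 1}) via the tail $\langle d_{n+1}(y), d_{n+2}(y), \dotsc \rangle_P$ is sound. The paper itself gives no proof of this proposition---it is quoted from \cite{Ahn23b}---and your argument is the natural one that this statement is meant to encapsulate.
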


Recall from \eqref{definition of A alpha} that the set $A(\alpha)$, defined for each $\alpha \in [0, \infty]$, is concerned with the growth rate of Pierce expansion digits. On this matter, one of the earliest historically significant results is the following, known as the {\em law of large numbers} in Pierce expansions.

\begin{proposition} [{\cite[Theorem 16]{Sha86}}] \label{law of large numbers}
The set $A(1)$ has full Lebesgue measure on $[0,1]$, i.e., for Lebesgue-almost every $x \in [0,1]$, we have
\[
\lim_{n \to \infty} \frac{\log d_n(x)}{n} = 1.
\]
\end{proposition}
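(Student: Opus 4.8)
The plan is to exploit a Markov‑type structure of the Pierce digits under the Lebesgue probability measure $\lambda$ on $[0,1]$, and then run a strong law of large numbers on the increments of $\log d_n$. Since $[0,1]\setminus\mathbb I=\mathbb Q\cap[0,1]$ is $\lambda$‑null, I would work on $\mathbb I$, where by Proposition~\ref{f is homeo} the digits $(d_k(x))_{k\in\mathbb N}$ are finite and strictly increasing; in particular $d_k(x)\to\infty$ for every $x\in\mathbb I$.

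First I would establish the transition law of the digits. Fix $n\in\mathbb N$ and $\sigma=(\sigma_1,\dots,\sigma_n)\in\Sigma_n$. By Proposition~\ref{shift of digits proposition}, the affine map $g_\sigma$, of constant slope $\pm(\sigma_1\cdots\sigma_n)^{-1}$, restricts to a bijection of $(0,(\sigma_n+1)^{-1}]\cap\mathbb I$ onto $I_\sigma\cap\mathbb I$ whose inverse is $y\mapsto T^n(y)$; and by Proposition~\ref{I sigma} the fundamental intervals $\{I_\sigma:\sigma\in\Sigma_n\}$ are pairwise disjoint and cover $[0,1]$ up to a countable set. Hence conditioning on $\mathcal F_n:=\sigma(d_1,\dots,d_n)$ makes $x$ uniform on the relevant $I_\sigma$, and pushing this forward through the affine map $g_\sigma^{-1}$ shows that, conditionally on $\mathcal F_n$, the iterate $T^n(x)$ is uniform on $(0,(d_n(x)+1)^{-1}]$. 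Writing $a:=d_n(x)$, this gives
\[
\lambda\bigl(d_{n+1}(x)=m\mid\mathcal F_n\bigr)=\frac{a+1}{m(m+1)}\qquad(m\ge a+1).
\]

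Next I would set $U_{n+1}:=(d_n(x)+1)\,T^n(x)$ and $E_{n+1}:=-\log U_{n+1}$; by the previous step $U_{n+1}$ is uniform on $(0,1]$ and independent of $\mathcal F_n$, so $E_{n+1}$ is $\mathrm{Exp}(1)$ given $\mathcal F_n$, while $d_{n+1}(x)=\lfloor(d_n(x)+1)/U_{n+1}\rfloor$. Using $\lfloor c/U\rfloor\in(c/U-1,\,c/U]$ and $c/U-1\ge(c/U)(1-1/c)$ (valid since $U\le1$), with $c=d_n(x)+1$, and taking logarithms,
\[
0\ \le\ \bigl(\log(d_n(x)+1)+E_{n+1}\bigr)-\log d_{n+1}(x)\ \le\ \log\!\bigl(1+1/d_n(x)\bigr).
\]
Thus the \emph{adapted} increments $\widetilde E_{k+1}:=\log(d_{k+1}(x)+1)-\log(d_k(x)+1)$ satisfy $\widetilde E_{k+1}=E_{k+1}+\xi_{k+1}$ with $0<\widetilde E_{k+1}\le E_{k+1}+\log2$ and $|\xi_{k+1}|\le 1/d_k(x)\to0$, and telescoping gives $\log(d_n(x)+1)=\log(d_1(x)+1)+\sum_{k=1}^{n-1}\widetilde E_{k+1}$.

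The hard part is that the $E_k$ are \emph{not} independent (in fact $x$, and with it the whole sequence $(U_k)_{k\ge2}$, is a measurable function of the single pair $(d_1(x),U_2)$), so the i.i.d.\ strong law is unavailable; I would get around this by working with the $\mathcal F_{k+1}$‑measurable differences $\widetilde E_{k+1}$ and splitting $\widetilde E_{k+1}=1+M_{k+1}+r_{k+1}$, where $M_{k+1}:=\widetilde E_{k+1}-\mathbb E[\widetilde E_{k+1}\mid\mathcal F_k]$ and $r_{k+1}:=\mathbb E[\widetilde E_{k+1}\mid\mathcal F_k]-1=\mathbb E[\xi_{k+1}\mid\mathcal F_k]$. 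Then $(M_{k+1})_k$ is a martingale‑difference sequence for $(\mathcal F_{k+1})_k$ with $\mathbb E[M_{k+1}^2\mid\mathcal F_k]\le\mathbb E[(E_{k+1}+\log2)^2\mid\mathcal F_k]=2+2\log2+(\log2)^2$, so $\sum_k k^{-1}M_{k+1}$ converges a.s.\ (its partial sums form an $L^2$‑bounded martingale) and Kronecker's lemma gives $n^{-1}\sum_{k=1}^{n-1}M_{k+1}\to0$ a.e.; while $|r_{k+1}|\le1/d_k(x)\to0$ forces $n^{-1}\sum_{k=1}^{n-1}r_{k+1}\to0$ a.e.\ by Ces\`aro. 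Combining these with $n^{-1}\log(d_1(x)+1)\to0$ yields $n^{-1}\log(d_n(x)+1)\to1$, hence $n^{-1}\log d_n(x)\to1$ (since $0<\log(d_n(x)+1)-\log d_n(x)\le\log2$), for $\lambda$‑a.e.\ $x$; that is, $A(1)$ has full Lebesgue measure. As an alternative to the martingale step, one can instead prove $\mathbb E[(\log(d_n(x)+1)-n)^2]=O(n)$ — using that the $\widetilde E_{k+1}$ are uncorrelated up to an $O(1/k)$ error — and then apply Chebyshev, Borel--Cantelli along $n=j^2$, and monotone interpolation in $n$. Everything other than this law‑of‑large‑numbers step is routine bookkeeping with Propositions~\ref{I sigma} and~\ref{shift of digits proposition}.
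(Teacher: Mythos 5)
Your argument is correct, but note that the paper itself contains no proof of Proposition \ref{law of large numbers}: it is imported verbatim from Shallit \cite[Theorem 16]{Sha86}, so the only comparison available is with that original source. Your reduction of the digit process to uniform innovations is sound: conditionally on $\mathcal{F}_n$ the point $x$ is uniform on the fundamental interval $I_{(d_1,\dotsc,d_n)}$ (a positive-length interval by Proposition \ref{I sigma}), and since $T^n$ is affine there with image $(0,1/(d_n(x)+1))$ up to a null set (Proposition \ref{shift of digits proposition}), the variable $U_{n+1}=(d_n(x)+1)T^n(x)$ is uniform on $(0,1]$ and independent of $\mathcal{F}_n$, which gives exactly the stated transition law and the exponential representation of the increments. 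You also correctly identify the one real trap---the $U_k$ are each uniform and independent of the past digits but are not jointly independent---and your fix (passing to the adapted increments $\widetilde{E}_{k+1}$, centering to get martingale differences with bounded conditional variance, $L^2$-bounded martingale convergence plus Kronecker, and a Ces\`aro estimate for the $O(1/d_k)$ drift, using $d_k\geq k$ on $\mathbb{I}$) is a complete strong law; the final passage from $\log(d_n(x)+1)$ to $\log d_n(x)$ and from $\mathbb{I}$ to $[0,1]$ is harmless. This martingale route is genuinely different from the classical treatment: the proof in \cite{Sha86}, in the tradition of the Erd\H{o}s--R\'enyi--Sz\"usz analysis of Engel-type series, exploits the near-independence of the normalized increments directly, via moment/variance estimates, Chebyshev, and Borel--Cantelli along a subsequence with monotone interpolation---essentially the ``alternative'' you sketch in your last sentence, which, as written, would still need the advertised $O(1/k)$ covariance bound spelled out before it counts as a proof. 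What your martingale approach buys is that it avoids any quantitative decorrelation estimate; what the classical approach buys is the quantitative control that underlies the finer limit theorems (such as the fluctuation results used for \eqref{law of leap years} in \cite{Sha94}).
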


Regarding some exceptional sets emerging from the law of large numbers in Pierce expansions, we have recently established the following result.

\begin{proposition} [{\cite[Corollary 1.3]{Ahn24}}] \label{hdim A alpha}
For each $\alpha \in [0, \infty]$, the set $A(\alpha)$ has full Hausdorff dimension in $[0,1]$, i.e.,
\[
\hdim \left\{ x \in [0,1] : \lim_{n \to \infty} \frac{\log d_n(x)}{n} = \alpha \right\} = 1.
\]
In other words, Corollary \ref{LLN corollary}(\ref{LLN corollary 2}) holds when $U = [0,1]$.
\end{proposition}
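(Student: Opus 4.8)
This is \cite[Corollary 1.3]{Ahn23b}; I indicate the plan of proof. The inequality $\hdim A(\alpha)\le 1$ is immediate from $A(\alpha)\subseteq[0,1]$ and monotonicity of Hausdorff dimension (Proposition \ref{monotonicity and countable stability}), so only $\hdim A(\alpha)\ge 1$ is at issue. For $\alpha=1$ this is free: by the law of large numbers (Proposition \ref{law of large numbers}) the set $A(1)$ has full Lebesgue measure, hence $\mathcal{H}^1(A(1))>0$, hence $\hdim A(1)\ge 1$. For a general $\alpha\in[0,\infty]$ I would instead build, inside $A(\alpha)$, an explicit Cantor-type subset $E$ of full dimension, by prescribing for each large $n$ a big finite ``admissible set'' $R_n\subseteq\mathbb{N}$ of allowed values for the $n$-th Pierce digit. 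Concretely: when $\alpha\in(0,\infty)$ take $R_n$ to be the integers in $[e^{\alpha n-\alpha/4},e^{\alpha n+\alpha/4}]$; when $\alpha=0$, the integers in $[2^{\sqrt n},2^{\sqrt n}(1+c/\sqrt n)]$ for a small fixed $c>0$; when $\alpha=\infty$, the integers in $[e^{n^2},2e^{n^2}]$ — and in each case thin $R_n$ out to keep only every other element. One then checks that (a) $\max R_{n-1}<\min R_n$; (b) every $x$ with $d_n(x)\in R_n$ for all (large) $n$ is irrational and satisfies $\lim_{n\to\infty}\log d_n(x)/n=\alpha$, i.e.\ $x\in A(\alpha)$; and (c) $\#R_n$ is, up to a bounded factor, as large as $e^{\alpha n}$ (resp.\ $2^{\sqrt n}/\sqrt n$, resp.\ $e^{n^2}$). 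Setting $E:=\{x\in[0,1]:d_n(x)\in R_n\text{ for every large }n\}$, property (b) gives $E\subseteq A(\alpha)$, so it remains to show $\hdim E\ge 1$.

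For this I would apply a standard mass-distribution estimate for Cantor constructions (see, e.g., \cite[Chapter 4]{Fal14}). By Proposition \ref{I sigma} the fundamental interval $I_{(\sigma_1,\dots,\sigma_n)}$ has length $1/(\sigma_1\cdots\sigma_n(\sigma_n+1))$, and the children of $I_{(\sigma_1,\dots,\sigma_{n-1})}$ are exactly the intervals $I_{(\sigma_1,\dots,\sigma_{n-1},m)}$, $m>\sigma_{n-1}$, arranged consecutively inside it; writing $E=\bigcap_n E_n$ with $E_n$ the union of the closures of the level-$n$ fundamental intervals having $\sigma_k\in R_k$, each component of $E_{n-1}$ contains $\#R_n$ components of $E_n$. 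Here the thinning of $R_n$ is precisely what guarantees that consecutive components of $E_n$ are not merely abutting but separated by a genuine gap, and one verifies that both these gaps and the minimal length of a level-$n$ component are of the same order $\delta_n$, comparable to $1/\bigl((\max R_1)\cdots(\max R_n)\cdot\max R_n\bigr)$. The Cantor lower bound then gives
\[
\hdim E\ \ge\ \liminf_{n\to\infty}\frac{\log(\#R_1\cdots\#R_{n-1})}{-\log(\#R_n\,\delta_n)},
\]
and a short arithmetic check shows the right-hand side is $1$. For instance, when $\alpha\in(0,\infty)$ one has $\log\#R_k=\alpha k+O(1)$, so the numerator equals $\alpha n^2/2+O(n)$; and $-\log\delta_n=\sum_{k\le n}\log\max R_k+\log\max R_n+O(1)=\alpha n^2/2+O(n)$, whence $-\log(\#R_n\,\delta_n)=\alpha n^2/2+O(n)$ as well, and the ratio tends to $1$. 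The cases $\alpha\in\{0,\infty\}$ are entirely analogous, with $\alpha n^2/2$ replaced by $\tfrac{2}{3}n^{3/2}\log 2$, resp.\ $n^3/3$. Together with $\hdim E\le 1$ this gives $\hdim E=1$, hence $\hdim A(\alpha)=1$.

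The main obstacle is the delicate calibration of the sets $R_n$, forced by a tension peculiar to Pierce expansions: the requirement $\lim_{n\to\infty}\log d_n(x)/n=\alpha$ confines $d_n$ to at most a bounded multiplicative window around $e^{\alpha n}$ once $n$ is large, while the digits must simultaneously remain strictly increasing — so one must size these windows so that $\#R_n$ stays comparable to the parent-to-child length ratio $\sigma_n^{\,2}/\sigma_{n-1}$ (which is exactly what makes the displayed $\liminf$ evaluate to $1$ rather than to something strictly smaller), yet still keep $R_n$ above $R_{n-1}$; checking that such a calibration exists, uniformly in $\alpha$ and including the degenerate endpoints $\alpha\in\{0,\infty\}$, is the real content. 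A secondary, purely geometric nuisance is that consecutive Pierce fundamental intervals share an endpoint (Proposition \ref{I sigma}), so no separation comes for free; this is dealt with cheaply by the factor-$2$ thinning, which affects none of the asymptotics above.
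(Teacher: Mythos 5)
The paper never proves this proposition: it is imported verbatim from \cite{Ahn23b}, so there is no internal argument to measure your sketch against, and it must be judged on its own. Judged so, it is a correct and essentially standard route to exactly the special case needed here. Your windows do satisfy $\max R_{n-1}<\min R_n$ for large $n$ (for $\alpha=0$ this is where the smallness of $c$, roughly $c<\tfrac12\log 2$, is used), membership forces $\log d_n(x)/n\to\alpha$, and $\log \#R_n=\alpha n+O(1)$ (resp.\ $\sqrt{n}\log 2+O(\log n)$, resp.\ $n^2+O(1)$); with the factor-$2$ thinning, the level-$n$ components inside a common parent are separated by an omitted fundamental interval of length comparable, via Proposition \ref{I sigma}, to $1/\bigl((\max R_1)\cdots(\max R_{n-1})(\max R_n)^2\bigr)$, so the lower bound of \cite[Chapter 4, Example 4.6]{Fal14} applies, and I checked that numerator and denominator are both $\alpha n^2/2+O(n)$, $\tfrac23(\log 2)\,n^{3/2}+O(n\log n)$, and $n^3/3+O(n^2)$ in the three regimes, so the ratio tends to $1$. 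Two routine points should be made explicit in a full write-up: the set $E=\{x: d_n(x)\in R_n \text{ for all large } n\}$ is a countable union over starting indices $N$ and admissible prefixes, and the mass-distribution estimate is applied to one fixed-prefix member of that union (the finitely many initial levels only perturb both logarithms by a constant depending on $N$); and for small $n$, or for $\alpha$ close to $0$, a window may contain fewer than two integers, which is again absorbed by working only from a large starting index onward. The trade-off versus the paper's treatment is clear: the citation buys brevity and rests on the stronger dimension formulas established in \cite{Ahn23b}, while your construction is self-contained and delivers precisely the full-dimension subsets of $A(\alpha)$ that the rest of the paper actually uses (note that your Cantor set $E$ itself is not finite replacement-invariant, but that is harmless since Theorem \ref{intersection with open set theorem} is applied to $A(\alpha)$, not to $E$).
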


%Section 3
\section{Auxiliary results} \label{Auxiliary results}

In this section, we present auxiliary results that will be utilized in proving the main results.

We first present crucial properties of the fundamental intervals in the next two lemmas, both of which are consequences of Propositions \ref{f is homeo} and \ref{I sigma}.

\begin{lemma} \label{union of fundamental intervals}
We have
\begin{align} \label{partition of unit interval}
(0,1] = \bigcup_{j \in \mathbb{N}} I_{(j)}.
\end{align}
Furthermore, for any $n \in \mathbb{N}$ and $\sigma \coloneqq (\sigma_k)_{k=1}^n \in \Sigma_n$, we have
\[
I_\sigma =
\begin{cases}
\{ \varphi (\sigma) \} \cup \bigcup_{j \geq \sigma_n+1} I_{(\sigma_1, \dotsc, \sigma_n, j)}, &\text{if } \sigma \in \Sigma'; \\
\bigcup_{j \geq \sigma_n+1} I_{(\sigma_1, \dotsc, \sigma_n, j)}, &\text{if } \sigma \not \in \Sigma',
\end{cases}
\]
and, consequently,
\begin{align} \label{partition of I sigma}
I_\sigma \cap \mathbb{I} = \bigcup_{j \geq \sigma_n+1} [ I_{(\sigma_1, \dotsc, \sigma_n, j)} \cap \mathbb{I} ].
\end{align}
\end{lemma}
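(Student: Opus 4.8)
The plan is to deduce everything from Proposition \ref{I sigma}, which describes each fundamental interval as a subinterval of $[0,1]$ with explicitly given endpoints $\varphi(\sigma)$ and $\varphi(\sigma')$, together with Proposition \ref{f is homeo}. First I would establish \eqref{partition of unit interval}. By definition $I_{(j)} = \{x \in [0,1] : d_1(x) = j\}$, and since $d_1(x) = \lfloor 1/x \rfloor$ for $x \neq 0$ ranges over all of $\mathbb{N}$ as $x$ ranges over $(0,1]$, the sets $I_{(j)}$, $j \in \mathbb{N}$, partition $(0,1]$. Concretely, Proposition \ref{I sigma} with $n=1$ gives $I_{(j)} = (\varphi(j+1), \varphi(j)] = (1/(j+1), 1/j]$ (note $(j) \in \Sigma_1 \subseteq \Sigma'$ since there is no constraint $\sigma_1 < \sigma_0 + 1$ for $n=1$), and $\bigcup_{j \in \mathbb{N}} (1/(j+1), 1/j] = (0,1]$.

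Next I would handle the recursive decomposition of a general $I_\sigma$ with $\sigma = (\sigma_k)_{k=1}^n \in \Sigma_n$. The children of $\sigma$ are the sequences $(\sigma_1, \dotsc, \sigma_n, j)$ with $j \geq \sigma_n + 1$ (these are exactly the elements of $\Sigma_{n+1}$ extending $\sigma$, because of the strict-increase condition). Using Proposition \ref{I sigma} for each child, $I_{(\sigma_1, \dotsc, \sigma_n, j)}$ has endpoints $\varphi(\sigma_1, \dotsc, \sigma_n, j)$ and $\varphi(\sigma_1, \dotsc, \sigma_n, j+1)$. As $j \to \infty$, $\varphi(\sigma_1, \dotsc, \sigma_n, j) \to \varphi(\sigma)$ since the last term $(-1)^{n+1}/(\sigma_1 \dotsm \sigma_n j)$ tends to $0$; and the child intervals are consecutive and non-overlapping (the right endpoint of one child equals the left endpoint of the next, up to the open/closed convention, by the explicit formulas in Proposition \ref{I sigma}). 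Tracking the endpoint conventions by parity of $n+1$: when $n$ is even, the children are ordered with $j$ increasing toward $\varphi(\sigma)$ from below and their union is $[\varphi(\sigma), \varphi(\sigma'))$ minus possibly the point $\varphi(\sigma)$; when $n$ is odd, symmetrically. The upshot is that $\bigcup_{j \geq \sigma_n+1} I_{(\sigma_1,\dotsc,\sigma_n,j)}$ equals $I_\sigma$ if $\sigma \notin \Sigma'$ (open-ended at $\varphi(\sigma)$) and equals $I_\sigma \setminus \{\varphi(\sigma)\}$ if $\sigma \in \Sigma'$ (in which case $\varphi(\sigma) = x$ where $x$ is the rational with $f(x) = \sigma$, by Proposition \ref{f is homeo}(\ref{f is homeo 2}), and this endpoint belongs to $I_\sigma$ but to none of the children since $d_{n+1}(x) = \infty$). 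This gives the displayed case distinction.

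Finally, \eqref{partition of I sigma} follows by intersecting the decomposition with $\mathbb{I}$: the exceptional point $\varphi(\sigma)$, when present, is rational (being $\varphi(\sigma)$ for $\sigma \in \Sigma'$ a finite sequence, hence a rational number), so it is removed by intersecting with $\mathbb{I}$, and the two cases collapse into the single formula $I_\sigma \cap \mathbb{I} = \bigcup_{j \geq \sigma_n+1} [I_{(\sigma_1,\dotsc,\sigma_n,j)} \cap \mathbb{I}]$. One should also check the union is disjoint on $\mathbb{I}$, which is immediate since an irrational $x \in I_\sigma$ has a well-defined finite digit $d_{n+1}(x) \in \mathbb{N}$ placing it in exactly one child. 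The only mild obstacle is bookkeeping the half-open/open endpoint conventions and the parity-of-$n$ casework from Proposition \ref{I sigma} carefully enough to see that the single boundary point $\varphi(\sigma)$ is the only possible discrepancy between $I_\sigma$ and the union of its children; everything else is routine.
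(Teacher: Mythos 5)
Your proposal is correct, but it takes a genuinely different route from the paper. The paper never touches interval endpoints: it writes $I_\sigma = f^{-1}(\Upsilon_\sigma)$ and pulls back the trivial cylinder identity $\Upsilon_\sigma \setminus \{\sigma\} = \bigcup_{j \geq \sigma_n+1} \Upsilon_{(\sigma_1,\dotsc,\sigma_n,j)}$ through $f$, then uses Proposition \ref{f is homeo}(\ref{f is homeo 2}) to identify $f^{-1}(\{\sigma\})$ as $\{\varphi(\sigma)\}$ or $\varnothing$ according as $\sigma \in \Sigma'$ or not, and Proposition \ref{I sigma} only to note $\varphi(\sigma) \in I_\sigma$ in the former case; the same argument with $\Sigma_0$ gives \eqref{partition of unit interval}, and \eqref{partition of I sigma} follows because $f^{-1}(\{\sigma\}) \subseteq \mathbb{Q}$. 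You instead tile $I_\sigma$ by the explicit child intervals from Proposition \ref{I sigma}. That works, and it is more concrete, but it carries the bookkeeping burden you acknowledge: you need the endpoint identity $\varphi(\sigma_1,\dotsc,\sigma_n,\sigma_n+1) = \varphi(\sigma_1,\dotsc,\sigma_{n-1},\sigma_n+1)$ (a one-line computation, but it is exactly what makes the outermost child reach the far endpoint $\varphi(\sigma')$ of $I_\sigma$, and you leave it implicit), plus the observation that the only child not in $\Sigma'$ is the one with $j=\sigma_n+1$, whose missing closed endpoint is picked up by the next child, so no interior point is lost. Also note a small directional slip: for $n$ even the children approach $\varphi(\sigma)$ from above, not from below (and symmetrically for $n$ odd); this does not affect your conclusion that the union is $I_\sigma$ minus at most the single point $\varphi(\sigma)$. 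The paper's symbolic-space argument buys you freedom from all of this parity and open/closed casework; your argument buys an explicit picture of how the fundamental intervals nest, at the cost of the routine but nontrivial endpoint verifications.
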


\begin{proof}
Since $\Sigma_0$ is a singleton containing the unique element $(\infty, \infty, \dotsc) = f(0) \in f([0,1])$, Proposition \ref{f is homeo}(\ref{f is homeo 2}) tells us that $f^{-1}(\Sigma_0) = \{ 0 \}$. Then, by definitions, we have
\[
\bigcup_{j \in \mathbb{N}} I_{(j)} = f^{-1} \left( \bigcup_{j \in \mathbb{N}} \Upsilon_{(j)} \right) = f^{-1}(\Sigma \setminus \Sigma_0) = [0,1] \setminus \{ 0 \} = (0,1].
\]
For the second assertion, fix $n \in \mathbb{N}$ and $\sigma \coloneqq (\sigma_k)_{k=1}^n \in \Sigma_n$. Then, we find, by definitions, that
\[
\bigcup_{j \geq \sigma_n+1} I_{(\sigma_1, \dotsc, \sigma_n, j)}
= f^{-1} \left( \bigcup_{j \geq \sigma_n+1} \Upsilon_{(\sigma_1, \dotsc, \sigma_n, j)} \right)
= f^{-1} (\Upsilon_\sigma \setminus \{ \sigma \}) = I_\sigma \setminus f^{-1}(\{ \sigma \}).
\]
Assume $\sigma \in \Sigma'$. Then, by Proposition \ref{f is homeo}(\ref{f is homeo 2}), we have $\sigma \in f([0,1])$ and $f^{-1}(\{ \sigma \}) = \{ \varphi (\sigma) \}$. But $\varphi (\sigma) \in I_\sigma$ by Proposition \ref{I sigma}, so that $[I_\sigma \setminus f^{-1}( \{ \sigma \})] \cup \{ \varphi (\sigma ) \} = I_\sigma$. On the other hand, if $\sigma \not \in \Sigma'$, then, in view of Proposition \ref{f is homeo}(\ref{f is homeo 2}), we have $\sigma \not \in f([0,1])$, i.e., $f^{-1} ( \{ \sigma \}) = \varnothing$. Hence, we have the expression for $I_\sigma$ as a countable union in the statement of the lemma. Lastly, note that, in any case, $f^{-1}(\{ \sigma \}) \subseteq \mathbb{Q}$. Therefore, \eqref{partition of I sigma} follows.
\end{proof}

\begin{lemma} \label{existence of I sigma lemma}
For any non-empty open subset $U$ of $[0,1]$, there exists a $\sigma \in \bigcup_{n \in \mathbb{N}} \Sigma_n$ such that $I_{\sigma} \subseteq U$.
\end{lemma}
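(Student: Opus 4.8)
The plan is to locate an irrational point inside $U$ and then use one of the fundamental intervals containing it, exploiting the fact that the fundamental intervals through a fixed point form a nested family whose diameters shrink to $0$.

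First I would fix a non-empty open subset $U$ of $[0,1]$. Since the set of irrationals $\mathbb{I}$ is dense in $[0,1]$ and $U$ is non-empty and open, I can choose some $x \in U \cap \mathbb{I}$. Because $U$ is open in $[0,1]$, there is an $\varepsilon > 0$ with $(x-\varepsilon, x+\varepsilon) \cap [0,1] \subseteq U$. For each $n \in \mathbb{N}$ put $\sigma^{(n)} \coloneqq (d_1(x), \dotsc, d_n(x))$; as $x$ is irrational, its Pierce digit sequence is strictly increasing and $\mathbb{N}$-valued, so $\sigma^{(n)} \in \Sigma_n$ and $x \in I_{\sigma^{(n)}}$, with $I_{\sigma^{(n+1)}} \subseteq I_{\sigma^{(n)}}$ for every $n$.

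The one substantive step is to show that $\diam I_{\sigma^{(n)}} \to 0$ as $n \to \infty$. By Proposition \ref{I sigma}, $I_{\sigma^{(n)}}$ is an interval whose endpoints are $\varphi(\sigma^{(n)})$ and $\varphi\big((d_1(x), \dotsc, d_{n-1}(x), d_n(x)+1)\big)$, and a direct evaluation of $\varphi$ shows its length equals $\big(d_1(x) \dotsm d_{n-1}(x)\, d_n(x)(d_n(x)+1)\big)^{-1}$. Since the digits are strictly increasing positive integers, $d_k(x) \geq k$ for all $k$, whence this length is at most $1/n!$, which tends to $0$. (Alternatively, from $\varphi \circ f = \mathrm{id}$ one gets $|x - \varphi(\sigma^{(n)})| \leq (d_1(x) \dotsm d_{n+1}(x))^{-1} \to 0$, and combining this with $x \in I_{\sigma^{(n)}}$ and $\varphi(\sigma^{(n)})$ being an endpoint of $I_{\sigma^{(n)}}$ gives the same conclusion.)

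Finally, I would pick $n \in \mathbb{N}$ with $\diam I_{\sigma^{(n)}} < \varepsilon$. Since $x \in I_{\sigma^{(n)}}$, every point of $I_{\sigma^{(n)}}$ lies within distance $\varepsilon$ of $x$, so $I_{\sigma^{(n)}} \subseteq (x-\varepsilon, x+\varepsilon)$; together with $I_{\sigma^{(n)}} \subseteq [0,1]$ this yields $I_{\sigma^{(n)}} \subseteq U$, and $\sigma \coloneqq \sigma^{(n)} \in \Sigma_n \subseteq \bigcup_{m \in \mathbb{N}} \Sigma_m$ is as required. I do not anticipate any real obstacle: the only thing with content is the diameter estimate, which is immediate from Proposition \ref{I sigma} and the explicit form of $\varphi$, and the rest is a routine use of the subspace topology on $[0,1]$.
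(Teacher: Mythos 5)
Your proof is correct, but it takes a genuinely different route from the paper's. You work metrically: you pick an irrational $x \in U$, take the nested fundamental intervals $I_{(d_1(x),\dotsc,d_n(x))}$ through $x$, and show their diameters shrink to $0$ via the explicit length $\bigl(d_1(x)\dotsm d_{n-1}(x)\,d_n(x)(d_n(x)+1)\bigr)^{-1} \leq 1/n!$, which follows from Proposition \ref{I sigma} together with the formula for $\varphi$ and the bound $d_k(x) \geq k$ for irrational $x$. The paper instead argues topologically: it shrinks $U$ to a compactly contained open interval $J$, uses the homeomorphism $f|_{\mathbb{I}} \colon \mathbb{I} \to \Sigma_\infty$ from Proposition \ref{f is homeo} to transport $J \cap \mathbb{I}$ to an open subset of $\Sigma_\infty$, extracts a basic cylinder set $\Upsilon_\sigma \cap \Sigma_\infty$ inside it, and then recovers $I_\sigma \subseteq U$ by a closure argument (needed because the cylinder only directly controls $I_\sigma \cap \mathbb{I}$). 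Your argument is more elementary and quantitative --- it bypasses the product topology on $\Sigma_\infty$ and the closure step entirely, at the modest cost of computing the interval length, which is not stated explicitly in the paper but is immediate from Proposition \ref{I sigma}; the paper's argument avoids any length computation and reuses machinery already set up for other purposes. Both proofs are complete and correct.
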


\begin{proof}
Let $U$ be a non-empty open subset of $[0,1]$. Take a non-degenerate open interval $(a,b) \subseteq [0,1]$ contained in $U$. Choose another non-degenerate open interval $J \coloneqq (a',b')$ contained in $(a,b)$ such that $a<a'$ and $b'<b$. Then, $J \cap \mathbb{I}$ is open in $\mathbb{I}$, and hence, by Proposition \ref{f is homeo}(\ref{f is homeo 1}), $f (J \cap \mathbb{I})$ is open in $\Sigma_\infty$. By the subspace topology on $\Sigma_\infty$, we can find a $\sigma \in \bigcup_{n \in \mathbb{N}} \Sigma_n$ such that $\Upsilon_\sigma \cap \Sigma_\infty \subseteq f (J \cap \mathbb{I})$. Hence, 
\[
I_\sigma \cap \mathbb{I} = f^{-1}(\Upsilon_\sigma) \cap f^{-1}(\Sigma_\infty) = f^{-1} (\Upsilon_\sigma \cap \Sigma_\infty) \subseteq J \cap \mathbb{I} \subseteq J.
\]
Recall from Proposition \ref{I sigma} that $I_\sigma$ is a subinterval of $[0,1]$. It then follows that 
\[
I_\sigma \subseteq \cl (I_\sigma) = \cl( I_\sigma \cap \mathbb{I}) \subseteq \cl(J) = [a',b'] \subseteq (a,b) \subseteq U,
\]
where $\cl( \cdot)$ denotes the closure in $[0,1]$. This completes the proof of the lemma.
\end{proof}

The following lemma characterizes a dense subset of $[0,1]$ in terms of Pierce expansions.

\begin{lemma} \label{prevalence lemma}
Let $E$ be a subset of $[0,1]$. Then, $E$ is dense in $[0,1]$ if and only if $I_\sigma \cap E \neq \varnothing$ for any $\sigma \in \bigcup_{n \in \mathbb{N}} \Sigma_n$.
\end{lemma}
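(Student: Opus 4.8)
The plan is to prove the two implications separately, relying entirely on facts already established, namely the explicit description of fundamental intervals in Proposition \ref{I sigma} and the approximation statement in Lemma \ref{existence of I sigma lemma}.

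For the forward implication, suppose $E$ is dense in $[0,1]$ and fix an arbitrary $\sigma \coloneqq (\sigma_k)_{k=1}^n \in \Sigma_n$ with $n \in \mathbb{N}$. I would first record that $I_\sigma$ is a \emph{non-degenerate} subinterval of $[0,1]$: by Proposition \ref{I sigma}, the endpoints of $I_\sigma$ are $\varphi(\sigma)$ and $\varphi(\sigma')$, where $\sigma' = (\sigma_1, \dotsc, \sigma_{n-1}, \sigma_n+1)$, and
\[
\varphi(\sigma) - \varphi(\sigma') = \frac{(-1)^{n+1}}{\sigma_1 \dotsm \sigma_{n-1}} \left( \frac{1}{\sigma_n} - \frac{1}{\sigma_n+1} \right) \neq 0
\]
(with the empty product read as $1$ when $n=1$). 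Hence $I_\sigma$ contains the non-empty open interval with endpoints $\varphi(\sigma)$ and $\varphi(\sigma')$, which is a non-empty open subset of $[0,1]$. Since $E$ is dense, it meets this open interval, and therefore $I_\sigma \cap E \neq \varnothing$.

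For the reverse implication, suppose $I_\sigma \cap E \neq \varnothing$ for every $\sigma \in \bigcup_{n \in \mathbb{N}} \Sigma_n$, and let $U$ be an arbitrary non-empty open subset of $[0,1]$. By Lemma \ref{existence of I sigma lemma}, there exists a $\sigma \in \bigcup_{n \in \mathbb{N}} \Sigma_n$ with $I_\sigma \subseteq U$. Combining this inclusion with the hypothesis gives $\varnothing \neq I_\sigma \cap E \subseteq U \cap E$, so $E$ intersects every non-empty open subset of $[0,1]$ and is thus dense. This closes the proof.

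I do not anticipate a genuine obstacle here, since both directions are short reductions to results already in hand. The only place requiring a moment's attention is the non-degeneracy of the fundamental intervals $I_\sigma$ in the forward direction, which is what licenses the passage from density of $E$ to $I_\sigma \cap E \neq \varnothing$; this is immediate from the explicit endpoints supplied by Proposition \ref{I sigma}.
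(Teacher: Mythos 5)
Your proof is correct and follows essentially the same route as the paper: density plus the non-degeneracy of $I_\sigma$ from Proposition \ref{I sigma} for the forward direction, and Lemma \ref{existence of I sigma lemma} for the converse. The only cosmetic difference is that you verify non-degeneracy by explicitly computing $\varphi(\sigma)-\varphi(\sigma')\neq 0$, which the paper simply cites from Proposition \ref{I sigma}.
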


\begin{proof}
Suppose first that $E$ is dense in $[0,1]$. Let $\sigma \in \bigcup_{n \in \mathbb{N}} \Sigma_n$ be arbitrary. Since $I_\sigma$ is a non-degenerate subinterval of $[0,1]$ by Proposition \ref{I sigma}, we can find a non-degenerate open interval $J$ contained in $I_\sigma$. Clearly, $J = J \cap [0,1]$ is open in $[0,1]$. Then, $J \cap E \neq \varnothing$ by the denseness hypothesis, and thus, $I_\sigma \cap E \neq \varnothing$.

Conversely, suppose that $I_\sigma \cap E \neq \varnothing$ for any $\sigma \in \bigcup_{n \in \mathbb{N}} \Sigma_n$. Let $U$ be a non-empty open subset of $[0,1]$. By Lemma \ref{existence of I sigma lemma}, we can find a $\tau \in \bigcup_{n \in \mathbb{N}} \Sigma_n$ satisfying $I_\tau \subseteq U$. Since $I_\tau \cap E \neq \varnothing$ by the hypothesis, we infer that $U \cap E \neq \varnothing$. This proves that $E$ is dense in $[0,1]$.
\end{proof}

\begin{lemma} \label{I sigma and open set equivalence lemma}
Let $E$ be a subset of $[0,1]$. The following statements are equivalent.
\begin{enumerate}[label=\upshape(\roman*), ref=\roman*, leftmargin=*, widest=2]
\item \label{I sigma and open set equivalence lemma 1}
$\hdim (I_\sigma \cap E) = \hdim E$ for any $\sigma \in \bigcup_{n \in \mathbb{N}} \Sigma_n$.
\item \label{I sigma and open set equivalence lemma 2}
$\hdim (U \cap E) = \hdim E$ for any non-empty open subset $U$ of $[0,1]$.
\end{enumerate}
\end{lemma}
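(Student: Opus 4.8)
The plan is to establish the two implications separately; in each case the argument reduces to monotonicity of Hausdorff dimension (Proposition \ref{monotonicity and countable stability}(\ref{monotonicity and countable stability 1})) combined with a sandwiching of sets, so no delicate estimate is involved.

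For $(\ref{I sigma and open set equivalence lemma 2}) \Rightarrow (\ref{I sigma and open set equivalence lemma 1})$, I would fix $\sigma \in \bigcup_{n \in \mathbb{N}} \Sigma_n$ and invoke Proposition \ref{I sigma} to note that $I_\sigma$ is a non-degenerate subinterval of $[0,1]$, hence contains some non-empty open interval $J$, which is also open in $[0,1]$. From the nesting $J \cap E \subseteq I_\sigma \cap E \subseteq E$ and monotonicity I obtain $\hdim (J \cap E) \le \hdim (I_\sigma \cap E) \le \hdim E$. Applying hypothesis $(\ref{I sigma and open set equivalence lemma 2})$ to the non-empty open set $U = J$ gives $\hdim (J \cap E) = \hdim E$, so both inequalities are equalities, which is exactly $(\ref{I sigma and open set equivalence lemma 1})$.

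For $(\ref{I sigma and open set equivalence lemma 1}) \Rightarrow (\ref{I sigma and open set equivalence lemma 2})$, I would take an arbitrary non-empty open $U \subseteq [0,1]$; monotonicity gives $\hdim (U \cap E) \le \hdim E$ at once. For the reverse inequality, Lemma \ref{existence of I sigma lemma} produces a $\sigma \in \bigcup_{n \in \mathbb{N}} \Sigma_n$ with $I_\sigma \subseteq U$, whence $I_\sigma \cap E \subseteq U \cap E$, and using hypothesis $(\ref{I sigma and open set equivalence lemma 1})$ together with monotonicity, $\hdim E = \hdim (I_\sigma \cap E) \le \hdim (U \cap E)$. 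Combining the two bounds yields $(\ref{I sigma and open set equivalence lemma 2})$.

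I do not expect any genuine obstacle here: the lemma is essentially a bookkeeping device translating between the ``fundamental interval'' description and the ``open set'' description, and all the substance lies in the already-established facts that every non-empty open subset of $[0,1]$ contains a fundamental interval (Lemma \ref{existence of I sigma lemma}) and that every fundamental interval $I_\sigma$ with $\sigma$ finite is non-degenerate (Proposition \ref{I sigma}). The only point needing a word of care is that a non-degenerate subinterval of $[0,1]$ — possibly half-open, and possibly abutting an endpoint of $[0,1]$ — still contains a set that is non-empty and open in $[0,1]$, but this is immediate since such an interval contains a non-degenerate open interval of $\mathbb{R}$.
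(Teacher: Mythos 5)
Your proof is correct and is exactly the argument the paper intends: the paper omits the details, saying only that the proof parallels that of Lemma \ref{prevalence lemma} with the added use of monotonicity, and your two implications (open interval inside $I_\sigma$ for one direction, Lemma \ref{existence of I sigma lemma} plus sandwiching for the other) are precisely that parallel argument spelled out.
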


\begin{proof}
The proof is similar to that of the preceding lemma, so we omit the details. However, it additionally requires the use of the monotonicity property of the Hausdorff dimension (Proposition \ref{monotonicity and countable stability}(\ref{monotonicity and countable stability 1})).
\end{proof}

\begin{lemma} \label{Z c is countable lemma}
Let $c \in [0, \infty)$. For each $M \in \mathbb{N}$, let
\begin{align} \label{definition of Z c}
Z_c^{(M)} \coloneqq \{ x \in \mathbb{I} : d_n(x) \leq n+c \text{ for all } n \geq M \}.
\end{align}
Then, for each $M \in \mathbb{N}$, $Z_c^{(M)}$ is equal to $Z_c^{(1)}$. Further, $Z_c^{(M)}$ is countable, hence is of Hausdorff dimension $0$.
\end{lemma}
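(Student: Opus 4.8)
The plan is to establish the two assertions separately: first that $Z_c^{(M)} = Z_c^{(1)}$ for every $M \in \mathbb{N}$, and then that $Z_c^{(1)}$ is countable, after which the statement about Hausdorff dimension is immediate.

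For the set equality, the inclusion $Z_c^{(1)} \subseteq Z_c^{(M)}$ is trivial, so I would only argue the reverse. Given $x \in Z_c^{(M)}$, I would use that the digit sequence of an irrational is strictly increasing (Proposition \ref{f is homeo}(\ref{f is homeo 1})), so that $d_n(x) \le d_M(x) - (M - n)$ for $1 \le n \le M$; combined with $d_M(x) \le M + c$ this forces $d_n(x) \le n + c$ for all $n \in \{1, \dots, M\}$ as well, hence $x \in Z_c^{(1)}$.

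For countability, the idea is to track the shifted quantity $c_n(x) \coloneqq d_n(x) - n$ along an $x \in Z_c^{(1)}$. Strict monotonicity of the digits together with $d_1(x) \ge 1$ shows $(c_n(x))_{n \in \mathbb{N}}$ is non-negative and integer-valued, the definition of $Z_c^{(1)}$ bounds it above by $c$, and rewriting $d_{n+1}(x) \ge d_n(x) + 1$ shows it is non-decreasing. A bounded non-decreasing integer sequence is eventually constant, so $d_n(x) = n + j(x)$ for all $n$ beyond some index $N(x)$, where $j(x) \in \{0, 1, \dots, \lfloor c \rfloor\}$. Consequently the digit sequence of each $x \in Z_c^{(1)}$ is eventually one of the finitely many explicit sequences $(n + j)_{n \in \mathbb{N}}$, $0 \le j \le \lfloor c \rfloor$; since only countably many sequences are eventually equal to a given one, $f(Z_c^{(1)})$ is countable, and by injectivity of $f|_{\mathbb{I}}$ (Proposition \ref{f is homeo}(\ref{f is homeo 1})) so is $Z_c^{(1)}$. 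Finally, $\hdim Z_c^{(1)} = 0$ because a countable set has Hausdorff dimension $0$, e.g.\ by writing $Z_c^{(1)}$ as a countable union of singletons and invoking countable stability (Proposition \ref{monotonicity and countable stability}(\ref{monotonicity and countable stability 2})).

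I do not expect a genuine obstacle in this lemma; the single idea that drives it is the monotonicity of $n \mapsto d_n(x) - n$, and the only place demanding a little care is the bookkeeping with the ``at least $+1$ per step'' growth of the Pierce digits, both in the set-equality step and in deducing non-negativity and eventual constancy of the shifted sequence.
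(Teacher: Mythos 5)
Your proposal is correct and takes essentially the same route as the paper: the set-equality argument (telescoping the strict increase of digits down from $d_M(x)\leq M+c$) is identical, and the countability rests on the same key observation that $n \mapsto d_n(x)-n$ is integer-valued, non-decreasing, and bounded above by $c$. The only difference is the final counting device: the paper records, for each digit sequence, the unique $\lfloor c \rfloor$-tuple of indices at which $d_n(x)-n$ jumps, giving an injection of $f(Z_c^{(1)})$ into $\mathbb{N}^{\lfloor c \rfloor}$, whereas you note the quantity is eventually constant, so each sequence is eventually equal to one of the finitely many sequences $(n+j)_{n\in\mathbb{N}}$ with $0 \leq j \leq \lfloor c \rfloor$, and only countably many sequences share a tail with a fixed one --- both conclusions are valid and of comparable effort.
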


\begin{proof}
Let $M \in \mathbb{N} \setminus \{ 1 \}$. By definition, we clearly have $Z_c^{(M)} \supseteq Z_c^{(1)}$. To prove the reverse inclusion, suppose $x \in Z_c^{(M)}$. Then, $(d_n(x))_{n \in \mathbb{N}} \in \Sigma_\infty$ by Proposition \ref{f is homeo}(\ref{f is homeo 1}) so that $d_n (x) < d_{n+1} (x) < \infty$ for all $n \in \mathbb{N}$. Hence, we find that
\begin{align*}
d_1(x) 
\leq d_2(x)-1 \leq \dotsb \leq d_{M-1}(x)-(M-2) &\leq d_M(x)-(M-1) \\
&\leq M+c-(M-1) = 1+c,
\end{align*}
where the inequality in the second line holds true by the hypothesis $x \in Z_c^{(M)}$. From this, we see that $d_n(x) \leq n+c$ for all $n \in \{ 1, \dotsc, M-1 \}$, and thus for all $n \in \mathbb{N}$. Therefore, $x \in Z_c^{(1)}$, and this establishes the equality $Z_c^{(M)} = Z_c^{(1)}$.

Now, by the preceding paragraph, to finish the proof of the lemma, we only need to show that $Z_c^{(1)}$ is countable. It will then imply that $\hdim Z_c^{(1)}=0$, since any countable set has Hausdorff dimension $0$ (see \cite[p.~49]{Fal14}). By Proposition \ref{f is homeo}(\ref{f is homeo 1}), it is then sufficient to show that the set
\[
f(Z_c^{(1)}) = \{ (\sigma_n)_{n \in \mathbb{N}} \in \Sigma_\infty : \sigma_n \leq n+c \text{ for all } n \in \mathbb{N} \}
\]
is countable. If $c \in [0,1)$, then $f(Z_c^{(1)})$ is a singleton, since any $(\sigma_n)_{n \in \mathbb{N}} \in f(Z_c^{(1)})$ satisfies $n \leq \sigma_n \leq n+c < n+1$, so that $\sigma_n = n$, for all $n \in \mathbb{N}$. Assume $c \in [1, \infty)$. Put $\theta (\sigma, n) \coloneqq \sigma_n - n$ for each $\sigma \coloneqq (\sigma_n)_{n \in \mathbb{N}} \in f (Z_c^{(1)})$ and $n \in \mathbb{N}$. Note that for fixed $\sigma \in f (Z_c^{(1)})$, the map $\theta (\sigma, n)$ is non-decreasing in $n$. Indeed, given $\sigma \coloneqq (\sigma_n)_{n \in \mathbb{N}} \in f(Z_c^{(1)})$, we have
\[
\theta(\sigma,n+1) - \theta(\sigma,n) = \sigma_{n+1} - \sigma_n - 1 \geq 1 - 1 = 0
\]
for each $n \in \mathbb{N}$. Hence, for each $\sigma \coloneqq (\sigma_n)_{n \in \mathbb{N}} \in f(Z_c^{(1)})$, we can find an ordered $\lfloor c \rfloor$-tuple
\[
(n_1, n_2, \dotsc, n_{\lfloor c \rfloor}) \in \{ (n_k)_{k=1}^{\lfloor c \rfloor} \in \mathbb{N}^{\lfloor c \rfloor} : n_1 \leq n_2 \leq \dotsb \leq n_{\lfloor c \rfloor} \}
\]
such that
\[
\theta (\sigma, n) = \sigma_n - n =
\begin{cases}
0, &\text{if } n_0 \coloneqq 1 \leq n < n_1; \\
1, &\text{if } n_1 \leq n < n_2; \\
\vdots \\
\lfloor c \rfloor-1, &\text{if } n_{\lfloor c \rfloor-1} \leq n < n_{\lfloor c \rfloor}; \\
\lfloor c \rfloor, &\text{if } n_{\lfloor c \rfloor} \leq n.
\end{cases}
\]
Observe that the choice of $(n_1, n_2, \dotsc, n_{\lfloor c \rfloor})$ is unique. To see this, suppose that we have two distinct choices $(n_1, n_2, \dotsc, n_{\lfloor c \rfloor})$ and $(n_1', n_2', \dotsc, n_{\lfloor c \rfloor}')$ for some $\sigma \in f(Z_c^{(1)})$. Put $k \coloneqq \min \{ 1 \leq j \leq \lfloor c \rfloor : n_j \neq n_j' \}$. Without loss of generality, we may assume $n_k < n_k'$. Then, $n_{k-1}' = n_{k-1} \leq n_k \leq n_k'-1 < n_k'$. Since $\theta (\sigma, n)$ is non-decreasing in $n$ for fixed $\sigma$, we deduce that
\[
k \leq \theta (\sigma, n_k) \leq \theta (\sigma, n_k'-1) = k-1,
\]
a contradiction. Hence, the mapping $\sigma \mapsto (n_1, n_2, \dotsc, n_{\lfloor c \rfloor})$ is well-defined on $f(Z_c^{(1)})$. Furthermore, it is immediate from the definition that this mapping is an injection from $f(Z_c^{(1)})$ into $\mathbb{N}^{\lfloor c \rfloor}$. But $\mathbb{N}^{\lfloor c \rfloor}$ is countable, and thus, we conclude that $f(Z_c^{(1)})$ is countable. Hence the lemma.
\end{proof}

Recall the definition of the set $A(\alpha)$, $\alpha \in [0, \infty]$, described in \eqref{definition of A alpha}. The following lemma generalizes \cite[Lemma 4]{Sha94}, where the case for $\alpha = 1$ was established.

\begin{lemma} \label{growth rate of log product lemma}
Let $x \in [0,1]$. If $x \in A(\alpha)$ for some $\alpha \in [0, \infty]$, then
\[
\lim_{n \to \infty} \frac{\log (d_1 (x) \dotsm d_n (x))}{n^2/2} = \alpha.
\]
\end{lemma}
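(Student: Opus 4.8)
The plan is to recognize this as a Stolz--Cesàro (equivalently, a Cesàro-averaging) statement applied to the sequence $a_k \coloneqq \log d_k(x)$. Set $b_n \coloneqq \log(d_1(x) \dotsm d_n(x)) = \sum_{k=1}^{n} a_k$ and $c_n \coloneqq n^2/2$; the hypothesis $x \in A(\alpha)$ says exactly that $a_n/n \to \alpha$, and the goal is $b_n/c_n \to \alpha$.

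First I would dispose of the degenerate case $x \in \mathbb{Q} \cap [0,1]$: then $d_n(x) = \infty$ for all sufficiently large $n$, so (with the standing conventions $\log \infty = \infty$, $\infty/n = \infty$) we have $a_n/n \to \infty$, forcing $\alpha = \infty$, and likewise $b_n = \log(d_1(x) \dotsm d_n(x)) = \infty$ for all large $n$, whence $b_n/c_n \to \infty$; the claimed equality holds trivially. So from now on assume $x \in \mathbb{I}$. By Proposition \ref{f is homeo}(\ref{f is homeo 1}) we have $(d_k(x))_{k \in \mathbb{N}} \in \Sigma_\infty$, so every $d_k(x)$ is a finite integer with $d_1(x) \geq 1$ and $d_k(x) \geq 2$ for $k \geq 2$; in particular $a_k \geq 0$ for all $k$, and both $b_n$ and $c_n$ are well-behaved real sequences with $c_n \uparrow \infty$.

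Then I would apply the $\infty/\infty$ form of Stolz--Cesàro. Since $(c_n)$ is strictly increasing and unbounded, and
\[
\frac{b_n - b_{n-1}}{c_n - c_{n-1}} = \frac{a_n}{\tfrac{n^2}{2} - \tfrac{(n-1)^2}{2}} = \frac{2 a_n}{2n-1} = \frac{a_n}{n} \cdot \frac{2n}{2n-1} \xrightarrow[n \to \infty]{} \alpha
\]
(using $a_n/n \to \alpha$ and $2n/(2n-1) \to 1$, a computation valid whether $\alpha$ is finite or equal to $+\infty$), we conclude $b_n/c_n \to \alpha$, as desired. If one prefers to avoid invoking Stolz--Cesàro by name, the same conclusion follows by a direct squeeze: for finite $\alpha$ and any $\varepsilon > 0$, choose $N$ with $(\alpha - \varepsilon)k \leq a_k \leq (\alpha + \varepsilon)k$ for all $k \geq N$ (using only the upper bound when $\alpha = 0$), sum over $k \in \{N, \dots, n\}$, note $\sum_{k=N}^{n} k \sim n^2/2$, divide by $n^2/2$, and let $\varepsilon \downarrow 0$; when $\alpha = \infty$, replace the lower bound by $a_k \geq Mk$ for arbitrary $M$ and let $M \to \infty$.

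I do not anticipate any serious obstacle: the statement is a soft consequence of the hypothesis. The only points needing a little care are the bookkeeping in the rational/$\alpha = \infty$ case so that the extended-arithmetic conventions are applied consistently, and checking that the Stolz--Cesàro step (or the direct estimate) behaves correctly at the extreme values $\alpha \in \{0, \infty\}$.
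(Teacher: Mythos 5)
Your proof is correct. Your primary route is the $\infty/\infty$ Stolz--Ces\`aro theorem applied to $b_n=\sum_{k\le n}\log d_k(x)$ and $c_n=n^2/2$, whereas the paper argues directly: from $(\alpha-\varepsilon)n<\log d_n(x)<(\alpha+\varepsilon)n$ for $n>K$ it sums over $k\in\{K+1,\dotsc,n\}$ to get $(\alpha\pm\varepsilon)\,\tfrac{(n-K)(n+K+1)}{2}$ as bounds, divides by $n^2/2$, and lets $\varepsilon\to 0^+$ (omitting the analogous $\alpha=\infty$ case). These are really the same computation --- your ``direct squeeze'' alternative is verbatim the paper's argument --- but packaging it as Stolz--Ces\`aro buys a uniform treatment of $\alpha\in\{0,\infty\}$ without splitting cases, at the cost of invoking a named theorem the paper avoids. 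You are also more careful than the paper on one point: you explicitly dispose of rational $x$ (where digits are eventually $\infty$, which can only occur when $\alpha=\infty$) so that the extended-arithmetic conventions are applied consistently; the paper leaves this buried in the omitted ``similar'' case. Your difference-quotient computation $\frac{b_n-b_{n-1}}{c_n-c_{n-1}}=\frac{2a_n}{2n-1}\to\alpha$ is correct in all cases, including the extreme values, so there is no gap.
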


\begin{proof}
Suppose first that $x \in A(\alpha)$ for some $\alpha \in [0, \infty)$. Let $\varepsilon \in (0,\infty)$ be arbitrary. Since $(\log d_n(x))/n \to \alpha$ as $n \to \infty$ by assumption, there exists a $K \in \mathbb{N}$ such that 
$(\alpha - \varepsilon) n < \log d_n(x) < (\alpha + \varepsilon) n$ for all $n > K$. Then, for any $n > K$, we have
\[
(\alpha-\varepsilon) \frac{(n-K)(n+K+1)}{2} < \log (d_{K+1}(x) \dotsm d_n(x)) < (\alpha+\varepsilon) \frac{(n-K)(n+K+1)}{2}.
\]
It follows that
\[
\alpha- \varepsilon \leq \liminf_{n \to \infty} \frac{\log (d_1(x) \dotsm d_n(x))}{n^2/2} \leq \limsup_{n \to \infty} \frac{\log (d_1(x) \dotsm d_n(x))}{n^2/2} \leq \alpha + \varepsilon.
\]
On letting $\varepsilon \to 0^+$, the desired limit follows.

The proof for the case where $x \in A(\infty)$ is similar; hence, we omit the details.
\end{proof}

It is known that the series $\sum_{n \in \mathbb{N}} (1/ d_n(x))$ converges Lebesgue-almost everywhere on $[0,1]$ (see {\cite[Theorem 12]{Sha86}} and {\cite[Corollary 1.17]{Ahn23c}}). This is an immediate consequence of the law of large numbers (Proposition \ref{law of large numbers}). That is to say, if $x$ is an element of a full-Lebesgue measure set $A(1)$, then $\sum_{n \in \mathbb{N}} (1/d_n(x))$ is convergent. We shall need the following generalization of this fact.

\begin{lemma} \label{sum of reciprocal lemma}
Let $x \in [0,1]$. If $x \in A(\alpha)$ for some $\alpha \in (0, \infty]$, then the series $\sum_{n \in \mathbb{N}} (1/d_n(x) )$ is convergent.
\end{lemma}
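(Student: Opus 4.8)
The plan is to bound $1/d_n(x)$ above by the terms of a convergent geometric series and then apply the comparison test. The hypothesis $\alpha > 0$ is exactly what makes this work. Fix $\alpha \in (0, \infty]$ and $x \in A(\alpha)$.

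First I would treat the case $\alpha \in (0, \infty)$. Choose any $\varepsilon \in (0, \alpha)$, so that $\alpha - \varepsilon > 0$. Since $(\log d_n(x))/n \to \alpha$ as $n \to \infty$ by the membership $x \in A(\alpha)$, there is a $K \in \mathbb{N}$ with $\log d_n(x) > (\alpha - \varepsilon) n$ for all $n > K$; in particular $d_n(x) > e^{(\alpha - \varepsilon) n}$, and hence $1/d_n(x) < e^{-(\alpha - \varepsilon) n}$, for all $n > K$. Because $\alpha - \varepsilon > 0$, the geometric series $\sum_{n \in \mathbb{N}} e^{-(\alpha - \varepsilon) n}$ converges, so $\sum_{n > K} (1/d_n(x))$ converges by comparison; adding back the finitely many terms with $n \leq K$ shows that $\sum_{n \in \mathbb{N}} (1/d_n(x))$ converges. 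This mirrors the $\varepsilon$-argument already used in the proof of Lemma \ref{growth rate of log product lemma}.

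For $\alpha = \infty$ the argument is the same but simpler: since $(\log d_n(x))/n \to \infty$, there is a $K \in \mathbb{N}$ with $\log d_n(x) > n$ for all $n > K$ (using the threshold $1$ in the definition of divergence to $\infty$), whence $1/d_n(x) < e^{-n}$ for all $n > K$, and $\sum_{n \in \mathbb{N}} e^{-n}$ converges. If $x$ happens to be rational this is even more immediate, since then $d_n(x) = \infty$, so $1/d_n(x) = 0$, for all sufficiently large $n$, making the series a finite sum.

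There is no real obstacle here; the only point requiring a moment's care is that $d_n(x)$ may a priori equal $\infty$, but the convention $1/\infty = 0$ together with the observation just made handles this without affecting convergence, and one should note explicitly that the argument breaks down precisely when $\alpha = 0$ (where $\alpha - \varepsilon$ cannot be kept positive), which is why that value is excluded from the statement.
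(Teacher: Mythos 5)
Your proof is correct and is essentially the paper's argument in unpacked form: the paper notes that $x \in A(\alpha)$ gives $(1/d_n(x))^{1/n} \to e^{-\alpha} \in [0,1)$ and invokes the root test, which is exactly your comparison of $1/d_n(x)$ with a convergent geometric series $e^{-(\alpha-\varepsilon)n}$ (respectively $e^{-n}$ when $\alpha = \infty$). Your extra remarks about rational $x$ and the convention $1/\infty = 0$ are fine but not needed beyond what the root-test formulation already absorbs.
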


\begin{proof}
Suppose that $x \in A(\alpha)$ for some $\alpha \in (0, \infty]$. Since $(\log d_n(x))/n \to \alpha$, or, equivalently, $(d_n(x))^{1/n} \to e^\alpha \in (1, \infty]$, as $n \to \infty$, we see that $(1/d_n(x))^{1/n} \to 1/e^\alpha \in [0,1)$ as $n \to \infty$. Thus, the root test tells us that the series $\sum_{n \in \mathbb{N}} (1/d_n(x) )$ converges.
\end{proof}

%Section 4
\section{Proofs of main results} \label{Proofs of main results}

In this section, we prove the two main theorems and their corresponding corollaries mentioned in Section \ref{Introduction}.

%Section 4.1
\subsection{Proofs of Theorem \ref{intersection with open set theorem} and Corollary \ref{LLN corollary}}

\begin{proof} [Proof of Theorem \ref{intersection with open set theorem}]
Put $E \coloneqq F \cap \mathbb{I}$, which is clearly finite replacement-invariant. For part (\ref{intersection with open set theorem 1}), if we show that $E$ is dense in $[0,1]$, then it will imply that $F$ is also dense in $[0,1]$. For part (\ref{intersection with open set theorem 2}), we note that $\hdim (X \cap E) = \hdim (X \cap F)$ for any subset $X \subseteq [0,1]$. This equality follows directly from the simple fact that
\[
X \cap F = (X \cap F \cap \mathbb{Q}) \cup (X \cap F \cap \mathbb{I}) = (X \cap F \cap \mathbb{Q}) \cup (X \cap E),
\]
which, combined with countable stability (Proposition \ref{monotonicity and countable stability}(\ref{monotonicity and countable stability 2})), implies that
\begin{align*}
\hdim (X \cap F) 
&= \max \{ \hdim (X \cap F \cap \mathbb{Q}), \hdim (X \cap E) \} \\
&= \max \{ 0, \hdim (X \cap E) \} = \hdim (X \cap E),
\end{align*}
since $X \cap F \cap \mathbb{Q}$ is a countable set. Therefore, for a non-empty open subset $U$ of $[0,1]$, if the equality $\hdim (U \cap E) = \hdim E$ holds true, then we may conclude that $\hdim (U \cap F) = \hdim F$.

(\ref{intersection with open set theorem 1})
We note that the second assertion of part (\ref{intersection with open set theorem 1}) follows from the first, since a set with non-zero Hausdorff dimension is uncountable (see \cite[p.~49]{Fal14}).

To prove the first assertion, suppose that $F$ is uncountable. Then, $E$ is uncountable, and, in particular, $E \neq \varnothing$. We show that $I_\sigma \cap E \neq \varnothing$ for any $\sigma \in \bigcup_{n \in \mathbb{N}} \Sigma_n$. Suppose otherwise. Then, there exist $M \in \mathbb{N}$ and $\tau \coloneqq (\tau_n)_{n=1}^M \in \Sigma_M$ such that $I_\tau \cap E = \varnothing$. Observe that $I_{(1, \dotsc, M)} \cap E \neq \varnothing$. This is because, if $x \in E$, then, since $M \leq d_M(x) < d_{M+1}(x)$ by Proposition \ref{f is homeo}(\ref{f is homeo 1}), we have
\[
\langle 1, \dotsc, M, d_{M+1}(x), d_{M+2}(x), \dotsc \rangle_P \in I_{(1, \dotsc, M)} \cap E
\]
by finite replacement-invariance of $E$. This tells us that $\tau_M \neq M$. (Indeed, if $\tau_M = M$, then the fact that $\tau \in \Sigma_M$ leads us to deduce that $(\tau_n)_{n=1}^M = (n)_{n=1}^M$ by the definition of $\Sigma_M$. This would imply that $\varnothing \neq I_{(1, \dotsc, M)} \cap E = I_\tau \cap E = \varnothing$, which is a contradiction.) Hence, $\tau_M \geq M+1$ so that $\tau_M-M-1 \in [0, \infty)$. Write
\[
E = \bigcup_{k \in \mathbb{N}_0} Y_k \cup Z,
\]
where 
\begin{align*}
Y_k &\coloneqq \{ x \in [0,1] : d_{M+k}(x) \geq \tau_M+k \} \cap E, \quad \text{for } k \in \mathbb{N}_0, \\
Z &\coloneqq Z_{\tau_M-M-1}^{(M)} \cap E.
\end{align*}
Here, the set $Z_{\tau_M-M-1}^{(M)}$ is defined as in \eqref{definition of Z c}, i.e.,
\[
Z_{\tau_M-M-1}^{(M)} \coloneqq \{ x \in \mathbb{I} : d_n(x) \leq n + (\tau_M-M-1) \text{ for all } n \geq M \}.
\]
In fact, on one hand, it is clear from definitions that $Y_k \subseteq E$ for each $k \in \mathbb{N}_0$ and that $Z \subseteq E$; on the other hand, if $x \in E \setminus \bigcup_{k \in \mathbb{N}_0} Y_k$, then $d_{M+k}(x) \leq \tau_M+k-1$ for each $k \in \mathbb{N}_0$ so that $x \in Z_{\tau_M-M-1}^{(M)}$. Notice that each $Y_k$, $k \in \mathbb{N}_0$, is empty. Indeed, if $y \in Y_k$ for some $k \in \mathbb{N}_0$, then, since $\tau_{M}+k \leq d_{M+k}(y) < d_{M+k+1}(y)$, we have
\[
\langle \tau_1, \dotsc, \tau_M, \underbrace{\tau_M+1, \dotsc, \tau_M+k}_{\text{$k$ terms}}, d_{M+k+1}(y), d_{M+k+2}(y), \dotsc \rangle_P \in I_\tau \cap E,
\]
where the containment in $E$ is due to finite replacement-invariance of $E$. This contradicts the hypothesis $I_\tau \cap E = \varnothing$. Thus, $E = Z$, i.e., $E \subseteq Z_{\tau_M-M-1}^{(M)}$. But then, since $Z_{\tau_M-M-1}^{(M)}$ is countable by Lemma \ref{Z c is countable lemma}, it follows that $E$ is countable, contradicting the fact that $E$ is uncountable. Therefore, $I_\sigma \cap E \neq \varnothing$ for any $\sigma \in \bigcup_{n \in \mathbb{N}} \Sigma_n$, and thus, Lemma \ref{prevalence lemma} tells us that $E$ is dense in $[0,1]$.

(\ref{intersection with open set theorem 2})
By Lemma \ref{I sigma and open set equivalence lemma}, it suffices to show that $\hdim (I_\sigma \cap E) = \hdim E$ for any $\sigma \in \bigcup_{n \in \mathbb{N}} \Sigma_n$. Let $\sigma \coloneqq (\sigma_k)_{k=1}^n \in \Sigma_n$ for some $n \in \mathbb{N}$. If $\hdim E = 0$, then $\hdim (I_\sigma \cap E) =0$ by monotonicity (Proposition \ref{monotonicity and countable stability}(\ref{monotonicity and countable stability 1})), and hence, the equality $\hdim E = \hdim (I_\sigma \cap E)$ holds true.

Assume that $\hdim E > 0$. Put $\sigma_0 \coloneqq 0$. For each $m \in \{ 1, \dotsc, n \}$, define $\sigma^{(m)} (j) \in \Sigma_m$ by
\[
\sigma^{(m)} (j) \coloneqq (\underbrace{\sigma_1, \dotsc, \sigma_{m-1}}_{\text{$m-1$ terms}}, j)
\]
for each integer $j \geq \sigma_{m-1}+1$.

\begin{claim} 
For each $m \in \{ 1, \dotsc, n \}$, we have
\[
\hdim (I_{\sigma^{(m)} (j)} \cap E) = \hdim (I_{\sigma^{(m)} (j+1)} \cap E)
\]
for any integer $j \geq \sigma_{m-1}+1$.
\end{claim}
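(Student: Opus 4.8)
The plan is to move the problem off the fundamental intervals and onto associated ``tail sets'' by means of the affine maps of Proposition~\ref{shift of digits proposition}, and then to exploit finite replacement-invariance of $E$ to raise the $m$-th Pierce digit from $j$ to $j+1$. Fix $m \in \{ 1, \dotsc, n \}$ and an integer $j \ge \sigma_{m-1}+1$ (recall $\sigma_0 = 0$, so the case $m=1$ is included, with an empty initial block), abbreviate $\rho \coloneqq \sigma^{(m)}(j)$ and $\rho^+ \coloneqq \sigma^{(m)}(j+1)$, both of which lie in $\Sigma_m$, and set $A_j \coloneqq g_\rho^{-1}(I_\rho \cap E)$ and $A_{j+1} \coloneqq g_{\rho^+}^{-1}(I_{\rho^+} \cap E)$. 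Using Proposition~\ref{shift of digits proposition} together with $E \subseteq \mathbb{I}$, I would first record the descriptions
\[
A_j = \{ z \in \mathbb{I} : d_1(z) > j \text{ and } g_\rho(z) \in E \}, \qquad A_{j+1} = \{ z \in \mathbb{I} : d_1(z) > j+1 \text{ and } g_{\rho^+}(z) \in E \} .
\]
Since $g_\rho$ and $g_{\rho^+}$ are affine with non-zero slope, hence bi-Lipschitz, Proposition~\ref{bi-Lipschitz invariance} reduces the Claim to the equality $\hdim A_j = \hdim A_{j+1}$.

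Next I would establish the disjoint decomposition $A_j = B \sqcup A_{j+1}$, where $B \coloneqq \{ z \in A_j : d_1(z) = j+1 \}$. The point is that if $z \in \mathbb{I}$ has $d_1(z) \ge j+2$, then $g_\rho(z)$ and $g_{\rho^+}(z)$ are well-defined Pierce expansions (with digit sequences $(\sigma_1, \dotsc, \sigma_{m-1}, j, d_1(z), d_2(z), \dotsc)$ and $(\sigma_1, \dotsc, \sigma_{m-1}, j+1, d_1(z), d_2(z), \dotsc)$) differing only in the $m$-th digit, so finite replacement-invariance gives $g_\rho(z) \in E \iff g_{\rho^+}(z) \in E$; this shows $\{ z \in A_j : d_1(z) \ge j+2 \} = A_{j+1}$, and in particular $A_{j+1} \subseteq A_j$, so $\hdim A_{j+1} \le \hdim A_j$. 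By the countable stability of the Hausdorff dimension (Proposition~\ref{monotonicity and countable stability}(\ref{monotonicity and countable stability 2})), it then suffices to prove $\hdim B \le \hdim A_{j+1}$.

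The set $B$ is the crux, and the obstacle is precisely that for $z \in B$ one cannot directly raise the $m$-th digit of $g_\rho(z)$ from $j$ to $j+1$, because its $(m+1)$-th digit $d_1(z)$ already equals $j+1$. To get around this I would decompose $B$ by the length of the initial run of consecutive digits: for $z \in B$, let $r(z) \in \mathbb{N} \cup \{ \infty \}$ be the largest integer with $d_i(z) = j+i$ for all $1 \le i \le r(z)$ (so $r(z) \ge 1$). The piece $\{ z \in B : r(z) = \infty \}$ consists of at most the single point $\langle j+1, j+2, j+3, \dotsc \rangle_P$ and so has Hausdorff dimension $0$. For a finite $k$ and $z \in B$ with $r(z) = k$, one has $d_{k+1}(z) \ge j+k+2$, and applying finite replacement-invariance to the first $m+k$ digits of $g_\rho(z) \in E$ — replacing the block $(\sigma_1, \dotsc, \sigma_{m-1}, j, j+1, \dotsc, j+k)$ by $(\sigma_1, \dotsc, \sigma_{m-1}, j+1, j+2, \dotsc, j+k+1)$, which is legitimate since the new block is strictly increasing and $j+k+1 < d_{k+1}(z)$ — produces a point of $I_{\rho^+} \cap E$ whose image under $g_{\rho^+}^{-1}$ is $w \coloneqq \langle j+2, j+3, \dotsc, j+k+1, d_{k+1}(z), d_{k+2}(z), \dotsc \rangle_P \in A_{j+1}$. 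Crucially, the assignment $z \mapsto w$ is the restriction to $\{ z \in B : r(z) = k \}$ of $g_{(j+2, \dotsc, j+k+1)} \circ g_{(j+1, \dotsc, j+k)}^{-1}$, a composition of affine maps with non-zero slope and hence a bi-Lipschitz injection into $A_{j+1}$; so Propositions~\ref{bi-Lipschitz invariance} and~\ref{monotonicity and countable stability}(\ref{monotonicity and countable stability 1}) give $\hdim \{ z \in B : r(z) = k \} \le \hdim A_{j+1}$.

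Putting these together, $B$ is a countable union of sets each of Hausdorff dimension at most $\hdim A_{j+1}$, so $\hdim B \le \hdim A_{j+1}$ by countable stability, and therefore $\hdim A_j = \max \{ \hdim B, \hdim A_{j+1} \} = \hdim A_{j+1}$; unwinding the bi-Lipschitz reduction then yields $\hdim(I_{\sigma^{(m)}(j)} \cap E) = \hdim(I_{\sigma^{(m)}(j+1)} \cap E)$. I expect the main difficulty of the actual write-up to be the bookkeeping around the run-length decomposition together with the verification that the indicated composition of $g$-maps really coincides with $z \mapsto w$ on the relevant domain; the remaining well-definedness checks for the finite replacements are routine consequences of Propositions~\ref{f is homeo}, \ref{I sigma}, and~\ref{shift of digits proposition}.
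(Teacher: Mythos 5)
Your proof is correct, and while it relies on the same basic toolkit as the paper---finite replacement-invariance to shift an initial block of digits up by one, the affine maps $g_\sigma$ of Proposition~\ref{shift of digits proposition} together with bi-Lipschitz invariance, and countable stability---the way you organize the hard inequality is genuinely different. The paper works directly with $I_{\sigma^{(m)}(j)} \cap E$, splits it according to the first position $m+k-1$ at which the digit reaches $j+k$, refines further into individual cylinders $(j, \tau_{m+1}, \dotsc, \tau_{m+k-1})$ with $\tau_{m+k-1} \ge j+k$ (each mapped by its own affine map onto a subset of $I_{\sigma^{(m)}(j+1)} \cap E$), and disposes of the leftover set $Z$ by invoking the countability result of Lemma~\ref{Z c is countable lemma}. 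You instead conjugate by $g_{\sigma^{(m)}(j)}^{-1}$ to the tail sets $A_j$, observe that the easy inequality becomes the literal inclusion $A_{j+1} \subseteq A_j$ (so no embedding argument is needed there), and handle the remainder $B = \{ z \in A_j : d_1(z) = j+1 \}$ by the run-length decomposition, where each piece $\{ r(z) = k \}$ is moved into $A_{j+1}$ by the single affine map $g_{(j+2, \dotsc, j+k+1)} \circ g_{(j+1, \dotsc, j+k)}^{-1}$ and the leftover is the single point $\langle j+1, j+2, \dotsc \rangle_P$. What your route buys is economy: the exceptional set is a singleton rather than an appeal to Lemma~\ref{Z c is countable lemma}, and each piece of the decomposition is handled by one map rather than a further countable union over cylinders; what the paper's route buys is that it stays entirely in the ambient interval and matches the notation set up for the rest of the argument. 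All the well-definedness checks you flag (that $j+k+1 < d_{k+1}(z)$ on the piece $\{ r(z)=k \}$, that the replaced blocks are strictly increasing, and that the indicated composition agrees with $z \mapsto w$ via Proposition~\ref{shift of digits proposition}) do go through, so the write-up you sketch is sound.
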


\begin{proof} [Proof of Claim] \renewcommand\qedsymbol{$\blacksquare$}
Let $\tau \in \bigcup_{n \in \mathbb{N}} \Sigma_n$ be arbitrary. Consider the map $g_\tau \colon [0,1] \to g_\tau ([0,1])$, defined as in Proposition \ref{shift of digits proposition}, and its inverse $g_\tau^{-1}$. By Proposition \ref{shift of digits proposition}(\ref{shift of digits proposition 2}), we have $I_\tau \cap E \subseteq I_\tau \cap \mathbb{I} \subseteq g_\tau([0,1])$, with $I_\tau \cap E$ being non-empty by part (\ref{intersection with open set theorem 1}) of this theorem and Lemma \ref{prevalence lemma}. So, we may consider the restriction of the map $g_\tau^{-1}$ to the set $I_\tau \cap E$. Throughout the proof of this claim, for each $\tau \in \bigcup_{n \in \mathbb{N}} \Sigma_n$, we define a map
\begin{align} \label{definition of g tilde}
\widetilde{g}_\tau \colon I_\tau \cap E \to g_\tau^{-1}(I_\tau \cap E) \quad \text{by} \quad \widetilde{g}_\tau \coloneqq g_\tau^{-1} \quad \text{on} \quad I_\tau \cap E. 
\end{align}
Note that, by definition, $\widetilde{g}_\tau$ is bi-Lipschitz and bijective.

Let $m \in \{ 1, \dotsc, n \}$, and let $j \in \mathbb{N}$ be arbitrary such that $j \geq \sigma_{m-1}+1$. We first show that
\[
\hdim (I_{\sigma^{(m)} (j)} \cap E) \geq \hdim (I_{\sigma^{(m)} (j+1)} \cap E).
\]
Define the map $\widetilde{g}_{\sigma^{(m)} (j+1)}$ as in \eqref{definition of g tilde}, which is bi-Lipschitz and bijective. Then, the composition
\[
g \coloneqq g_{\sigma^{(m)} (j)} \circ \widetilde{g}_{\sigma^{(m)} (j+1)} \colon I_{\sigma^{(m)} (j+1)} \cap E \to g (I_{\sigma^{(m)} (j+1)} \cap E)
\]
is well-defined, and is bi-Lipschitz and bijective. Observe that
\begin{align} \label{image of g is subset}
g (I_{\sigma^{(m)} (j+1)} \cap E) \subseteq I_{\sigma^{(m)} (j)} \cap E.
\end{align}
To see this, let $x \in I_{\sigma^{(m)} (j+1)} \cap E$, and write
\begin{align} \label{inverse image of x under phi 1}
x = \langle \underbrace{\sigma_1, \dotsc, \sigma_{m-1}}_{\text{$m-1$ terms}}, j+1, d_{m+1}(x), d_{m+2}(x), \dotsc \rangle_P.
\end{align}
By Proposition \ref{shift of digits proposition}(\ref{shift of digits proposition 2}), it follows that
\[
\widetilde{g}_{\sigma^{(m)} (j+1)}(x) = \langle d_{m+1}(x), d_{m+2}(x), \dotsc \rangle_P \in \mathbb{I}.
\]
Since $j < j+1 < d_{m+1}(x)$, we find, in view of Proposition \ref{shift of digits proposition}(\ref{shift of digits proposition 1}), that
\[
g(x) = 
\langle \underbrace{\sigma_1, \dotsc, \sigma_{m-1}}_{\text{$m-1$ terms}}, j, d_{m+1}(x), d_{m+2}(x), \dotsc \rangle_P.
\]
Hence, $g(x) \in I_{\sigma^{(m)} (j)} \cap E$, where the containment in $E$ follows from \eqref{inverse image of x under phi 1} and the finite replacement-invariance of $E$. Hence, we have the inclusion \eqref{image of g is subset}. Thus, by using bi-Lipschitz invariance (Proposition \ref{bi-Lipschitz invariance}), \eqref{image of g is subset}, and monotonicity (Proposition \ref{monotonicity and countable stability}(\ref{monotonicity and countable stability 1})), we conclude that
\[
\hdim (I_{\sigma^{(m)} (j+1)} \cap E) = \hdim g(I_{\sigma^{(m)} (j+1)} \cap E) \leq \hdim (I_{\sigma^{(m)} (j)} \cap E),
\]
as desired. 

It remains to prove the reverse inequality
\begin{align} \label{reverse inequality}
\hdim (I_{\sigma^{(m)} (j)} \cap E) \leq \hdim (I_{\sigma^{(m)} (j+1)} \cap E).
\end{align}
For each $k \geq 2$ and $(l, \tau_{m+1}, \dotsc, \tau_{m+k-1}) \in \Sigma_k$, where $l \in \{ j, j+1 \}$, define 
\[
\sigma^{(m)} (l, \tau_{m+1}, \dotsc, \tau_{m+k-1}) \coloneqq 
(\underbrace{\sigma_1, \dotsc, \sigma_{m-1}}_{\text{$m-1$ terms}}, l, \tau_{m+1}, \dotsc, \tau_{m+k-1}),
\]
which clearly satisfies $\sigma^{(m)} (l, \tau_{m+1}, \dotsc, \tau_{m+k-1}) \in \Sigma_{m+k-1}$ since $\sigma_{m-1} < j \leq l$. Write
\begin{align} \label{partition of I sigma m j}
I_{\sigma^{(m)} (j)} \cap E = \bigcup_{k \geq 2} Y_k \cup Z,
\end{align}
where
\begin{align*}
Y_k
&\coloneqq \bigcup_{\substack{\sigma^{(m)} (j, \tau_{m+1}, \dotsc, \tau_{m+k-1}) \in \Sigma_{m+k-1} \\ \tau_{m+k-1} \geq j+k}} [I_{\sigma^{(m)} (j, \tau_{m+1}, \dotsc, \tau_{m+k-1})} \cap E], \quad \text{for } k \geq 2, \\
Z
&\coloneqq I_{\sigma^{(m)} (j)} \cap Z_{j-m}^{(m+1)} \cap E.
\end{align*}
Here, the set $Z_{j-m}^{(m+1)}$ is defined as in \eqref{definition of Z c}, i.e.,
\[
Z_{j-m}^{(m+1)} \coloneqq \{ x \in \mathbb{I} : d_k(x) \leq k+(j-m) \text{ for all } k \geq m+1 \}.
\]
To see that \eqref{partition of I sigma m j} holds true, note first that the set on the right-hand side is evidently contained in the set on the left-hand side by the definition of the fundamental intervals. For the reverse inclusion, let $x \in I_{\sigma^{(m)} (j)} \cap E$, and suppose that $x \not \in \bigcup_{k \geq 2} Y_k$. Then, for each $k \geq 2$, since $x \not \in Y_k$, it must be that $d_{m+k-1}(x) \leq j+k-1$. Hence, $x \in Z_{j-m}^{(m+1)}$ so that $x \in Z$, and this verifies \eqref{partition of I sigma m j}.

Now, since $Z \subseteq Z_{j-m}^{(m+1)}$ by definition and since $\hdim Z_{j-m}^{(m+1)} = 0$ by Lemma \ref{Z c is countable lemma}, monotonicity (Proposition \ref{monotonicity and countable stability}(\ref{monotonicity and countable stability 1})) implies that $\hdim Z = 0$. Then, by \eqref{partition of I sigma m j} and countable stability (Proposition \ref{monotonicity and countable stability}(\ref{monotonicity and countable stability 2})), we deduce that
\begin{align*}
\hdim (I_{\sigma^{(m)} (j)} \cap E)
&= \sup_{k \geq 2} \{ \hdim Y_k, \hdim Z \} = \sup_{k \geq 2} \{ \hdim Y_k \}.
\end{align*}
Thus, to establish \eqref{reverse inequality}, it is enough to show that the inequality \eqref{hdim Y k upper bound} below holds for each integer $k \geq 2$.
\begin{align} \label{hdim Y k upper bound}
\hdim Y_k \
&= \sup_{\substack{\sigma^{(m)} (j, \tau_{m+1}, \dotsc, \tau_{m+k-1}) \in \Sigma_{m+k-1} \\ \tau_{m+k-1} \geq j+k}} \{ \hdim (I_{\sigma^{(m)} (j, \tau_{m+1}, \dotsc, \tau_{m+k-1})} \cap E) \} \nonumber \\
&\leq \hdim (I_{\sigma^{(m)} (j+1)} \cap E),
\end{align}
where the equality holds by the definition of $Y_k$ and countable stability (Proposition \ref{monotonicity and countable stability}(\ref{monotonicity and countable stability 2})). To achieve this, fix $k \geq 2$, and fix a sequence $\sigma^{(m)} (j, \tau_{m+1}, \dotsc, \tau_{m+k-1}) \in \Sigma_{m+k-1}$ satisfying $\tau_{m+k-1} \geq j+k$. Define the map $\widetilde{g}_{\sigma^{(m)} (j, \tau_{m+1}, \dotsc, \tau_{m+k-1})}$ as in \eqref{definition of g tilde}, which is bi-Lipschitz and bijective. Then, the composition
\begin{align*}
&h \coloneqq g_{\sigma^{(m)} (j+1, \dotsc, j+k)} \circ \widetilde{g}_{\sigma^{(m)} (j, \tau_{m+1}, \dotsc, \tau_{m+k-1})} \colon \\
&\hspace{3cm} I_{\sigma^{(m)} (j, \tau_{m+1}, \dotsc, \tau_{m+k-1})} \cap E \to h(I_{\sigma^{(m)} (j, \tau_{m+1}, \dotsc, \tau_{m+k-1})} \cap E)
\end{align*}
is well-defined, and is bi-Lipschitz and bijective. Notice that
\begin{align} \label{image of h is subset}
h (I_{\sigma^{(m)} (j, \tau_{m+1}, \dotsc, \tau_{m+k-1})} \cap E) \subseteq I_{\sigma^{(m)} (j+1)} \cap E.
\end{align}
To see this, let $x \in I_{\sigma^{(m)} (j, \tau_{m+1}, \dotsc, \tau_{m+k-1})} \cap E$, and write
\begin{align} \label{inverse image of x under phi 2}
x = \langle \underbrace{\sigma_1, \dotsc, \sigma_{m-1}}_{\text{$m-1$ terms}}, j, \tau_{m+1}, \dotsc, \tau_{m+k-1}, d_{m+k}(x), d_{m+k+1}(x), \dotsc \rangle_P.
\end{align}
By Proposition \ref{shift of digits proposition}(\ref{shift of digits proposition 2}), we have
\[
\widetilde{g}_{\sigma^{(m)} (j, \tau_{m+1}, \dotsc, \tau_{m+k-1})}(x) = \langle d_{m+k}(x), d_{m+k+1}(x), \dotsc \rangle_P \in \mathbb{I}.
\]
Since $\sigma_{m-1} < j < j+1$ and $j+k  \leq \tau_{m+k-1} < d_{m+k}(x)$, we infer, in light of Proposition \ref{shift of digits proposition}(\ref{shift of digits proposition 1}), that
\[
h(x) = \langle \underbrace{\sigma_1, \dotsc, \sigma_{m-1}}_{\text{$m-1$ terms}}, j+1, \dotsc, j+k, d_{m+k}(x), d_{m+k+1}(x), \dotsc \rangle_P.
\]
Then, $h(x) \in I_{\sigma^{(m)} (j+1)} \cap E$, where the containment in $E$ follows from \eqref{inverse image of x under phi 2} and finite replacement-invariance of $E$. Hence, we have the inclusion \eqref{image of h is subset}. Thus, by using bi-Lipschitz invariance (Proposition \ref{bi-Lipschitz invariance}), \eqref{image of h is subset}, and monotonicity (Proposition \ref{monotonicity and countable stability}(\ref{monotonicity and countable stability 1})), we find that
\begin{align*}
\hdim (I_{\sigma^{(m)} (j, \tau_{m+1}, \dotsc, \tau_{m+k-1})} \cap E) 
&= \hdim h (I_{\sigma^{(m)} (j, \tau_{m+1}, \dotsc, \tau_{m+k-1})} \cap E) \\
&\leq \hdim (I_{\sigma^{(m)} (j+1)} \cap E).
\end{align*}
This proves that \eqref{hdim Y k upper bound} holds for each integer $k \geq 2$. Hence, we have \eqref{reverse inequality}. This completes the proof of the claim.
\end{proof}

We are now ready to finish the proof of the theorem by using Claim. By \eqref{partition of unit interval} and countable stability (Proposition \ref{monotonicity and countable stability}(\ref{monotonicity and countable stability 2})), we have
\[
\hdim E
= \hdim \bigcup_{j \in \mathbb{N}} [I_{(j)} \cap E]
= \sup_{j \in \mathbb{N}} \{ \hdim (I_{(j)} \cap E) \}
= \hdim (I_{(\sigma_1)} \cap E),
\]
where we used Claim for the last equality. Recall that $E \subseteq \mathbb{I}$. Then, similarly, by using \eqref{partition of I sigma}, countable stability (Proposition \ref{monotonicity and countable stability}(\ref{monotonicity and countable stability 2})), and Claim, we find that
\begin{align*}
\hdim (I_{(\sigma_1)} \cap E)
&= \hdim \bigcup_{j \geq \sigma_1+1} [I_{(\sigma_1, j)} \cap E] \\
&= \sup_{j \geq \sigma_1+1} \{ \hdim (I_{(\sigma_1, j)} \cap E) \}
= \hdim (I_{(\sigma_1, \sigma_2)} \cap E).
\end{align*}
Therefore, using a similar argument, it can be concluded that
\begin{align*}
\hdim E
&= \hdim (I_{(\sigma_1)} \cap E)
= \hdim (I_{(\sigma_1, \sigma_2)} \cap E) 
= \dotsb = \hdim (I_{(\sigma_1, \sigma_2, \dotsc, \sigma_n)} \cap E),
\end{align*}
as was to be shown.
\end{proof}

\begin{proof} [Proof of Corollary \ref{LLN corollary}]
Let $\alpha \in [0, \infty]$. It is immediate from the definition \eqref{definition of A alpha} that $A(\alpha)$ is finite replacement-invariant. Recall from Proposition \ref{hdim A alpha} that $\hdim A(\alpha)=1>0$. We infer, in view of Theorem \ref{intersection with open set theorem}(\ref{intersection with open set theorem 1}), that $A(\alpha)$ is dense in $[0,1]$. Hence, we obtain part (\ref{LLN corollary 1}). Moreover, Theorem \ref{intersection with open set theorem}(\ref{intersection with open set theorem 2}) tells us that $\hdim (U \cap A(\alpha)) = \hdim A(\alpha)$ for any non-empty open subset $U$ of $[0,1]$. This establishes part (\ref{LLN corollary 2}).
\end{proof}

%Section 4.2
\subsection{Proofs of Theorem \ref{leap year theorem} and Corollary \ref{leap year corollary}}

Recall from \eqref{definition of S beta} the definition of the set $S(\alpha)$, $\alpha \in [0, \infty]$. The following lemma extends \cite[Theorem 3]{Sha94}, which presented the case for $\alpha=1$.

\begin{lemma} \label{A alpha subset S beta lemma}
For each $\alpha \in [0, \infty]$, we have $A(\alpha) \subseteq S(\alpha)$.
\end{lemma}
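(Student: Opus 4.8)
Let $x\in A(\alpha)$; the task is to evaluate the $\limsup$ and $\liminf$ in \eqref{definition of S beta}. The plan is to work throughout with
\[
D(x,N):=Nx-L((d_k(x))_{k\in\mathbb{N}},N)=\sum_{k\ge1}(-1)^{k+1}\Bigl(\tfrac{N}{P_k}-\bigl\lfloor\tfrac{N}{P_k}\bigr\rfloor\Bigr),\qquad P_k:=d_1(x)\cdots d_k(x),
\]
which drops out of \eqref{L N formula} and \eqref{Pierce expansion} by subtracting termwise; the series converges absolutely because the $k$-th term equals $N/P_k$ once $P_k>N$, the $P_k$ grow at least geometrically, and it is a finite sum when $x\in\mathbb{Q}$. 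Write $m=m(N):=\#\{k\ge1:P_k\le N\}$ (so $P_m\le N<P_{m+1}$) and split $D(x,N)=\sum_{k=1}^{m}(-1)^{k+1}\{N/P_k\}+\theta_N$, where $\{\cdot\}$ is the fractional part and $|\theta_N|\le N/P_{m+1}<1$ by the alternating series estimate. For the \emph{upper bound}, the remaining sum is a difference of at most $\lceil m/2\rceil$ odd-indexed and at most $\lfloor m/2\rfloor$ even-indexed terms, each in $[0,1)$, so $|D(x,N)|\le\lceil m/2\rceil+1$; Lemma~\ref{growth rate of log product lemma} then shows, for $\alpha\in(0,\infty]$ and any $\varepsilon>0$, that $P_k\le N$ forces $k\le\sqrt{2\log N/(\alpha-\varepsilon)}$ for large $N$, hence $\limsup_{N}m(N)/\sqrt{\log N}\le\sqrt{2/\alpha}$ (read as $0$ when $\alpha=\infty$), and therefore $\limsup_{N}|D(x,N)|/\sqrt{\log N}\le1/\sqrt{2\alpha}$. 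For $\alpha=0$ this bound is vacuous.

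For the \emph{lower bound} I would construct an explicit subsequence. If $\alpha<\infty$ then $x$ is irrational, so $(d_k(x))_k$ is strictly increasing with $d_k(x)\ge k$. Put $P_0:=1$ and, for each large $m$, set
\[
N_m:=P_m+\sum_{\substack{1\le j\le m\\ j\ \mathrm{odd}}}(d_j(x)-1)P_{j-1}.
\]
The telescoping identity $\sum_{j=1}^{m}(d_j-1)P_{j-1}=P_m-1$ gives $0\le N_m-P_m\le P_m-1$, so $P_m\le N_m<P_{m+1}$ and $m(N_m)=m$. The point of this choice is that, for $1\le k\le m$, every ratio $P_{j-1}/P_k$ with $j>k$ and the ratio $P_m/P_k$ are integers, while $\sum_{j\le k}e_j^{*}P_{j-1}/P_k\le\sum_{j\le k}(d_j-1)P_{j-1}/P_k=1-1/P_k<1$, where $e_j^{*}=d_j-1$ for odd $j$ and $e_j^{*}=0$ for even $j$; hence $\{N_m/P_k\}=\sum_{j=1}^{k}e_j^{*}/(d_j\cdots d_k)$. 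Interchanging the order of summation,
\[
\sum_{k=1}^{m}(-1)^{k+1}\{N_m/P_k\}=\sum_{j=1}^{m}e_j^{*}\sum_{k=j}^{m}\frac{(-1)^{k+1}}{d_j\cdots d_k}=\sum_{\substack{1\le j\le m\\ j\ \mathrm{odd}}}\frac{d_j-1}{d_j}(1-\delta_j),
\]
where $0\le\delta_j\le1/d_{j+1}$ records the alternating tail of the inner sum. Since $d_j\ge j\to\infty$, each summand tends to $1$, and there are $\lceil m/2\rceil$ of them, so the sum equals $m/2+o(m)$ (when $\alpha>0$, Lemma~\ref{sum of reciprocal lemma} even pins it to $\lceil m/2\rceil-O(1)$). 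Adding back $\theta_{N_m}$ yields $D(x,N_m)=m/2+o(m)$.

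To conclude I would compare with $\sqrt{\log N_m}$ via $\log P_m\le\log N_m<\log P_{m+1}$ and Lemma~\ref{growth rate of log product lemma}. For $\alpha\in(0,\infty)$ this gives $\log N_m\sim\alpha m^2/2$, so $D(x,N_m)/\sqrt{\log N_m}\to(1/2)\sqrt{2/\alpha}=1/\sqrt{2\alpha}$; with the upper bound, $\limsup_{N}D(x,N)/\sqrt{\log N}=1/\sqrt{2\alpha}$, and the mirror construction (swapping odd and even $j$ in $N_m$) gives $\liminf_{N}D(x,N)/\sqrt{\log N}=-1/\sqrt{2\alpha}$, so $x\in S(\alpha)$. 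For $\alpha=0$ we have $\log N_m<\log P_{m+1}=o(m^2)$, so $D(x,N_m)/\sqrt{\log N_m}\to+\infty$ and $\to-\infty$ along the mirror sequence, i.e.\ $x\in S(0)$. For $\alpha=\infty$ (which covers every rational $x$, for which the $P_k$ are eventually $\infty$) the upper-bound estimate by itself forces both the $\limsup$ and the $\liminf$ to be $0$, i.e.\ $x\in S(\infty)$. The main obstacle is the middle step: engineering the digits of $N_m$ so that the fractional parts $\{N_m/P_k\}$ collapse to the telescoping form above, and checking the resulting partial sum is $\sim m/2$ rather than merely $O(m)$ — once that is secured, Lemma~\ref{growth rate of log product lemma} does the remaining bookkeeping uniformly over the three ranges of $\alpha$.
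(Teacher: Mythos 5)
Your proposal is correct, and it is in fact more self-contained than what the paper actually writes down. The paper's proof of this lemma is largely by citation: the case $\alpha=1$ is Shallit's Theorem~3 verbatim, the cases $\alpha\in(0,\infty]$ are dispatched with ``follow Shallit's argument, adjust a parameter, and use Lemmas~\ref{growth rate of log product lemma} and \ref{sum of reciprocal lemma}'' with details omitted, and only $\alpha=0$ is treated in full, using Shallit's Theorem~2 (the bound $N_{2r+1}x-L(f(x),N_{2r+1})\ge r/4$ along $N_j=-1+d_1-d_1d_2+\dotsb$). You instead rebuild the whole argument: the decomposition $Nx-L=\sum_k(-1)^{k+1}\{N/P_k\}$ and the count $m(N)\lesssim\sqrt{2\log N/\alpha}$ via Lemma~\ref{growth rate of log product lemma} give the upper bound, and your own subsequence $N_m=P_m+\sum_{j\ \mathrm{odd}}(d_j-1)P_{j-1}$ (different from Shallit's $N_j$, and yielding the sharper $\sim m/2$ rather than $r/4$) gives the matching lower bound uniformly in $\alpha$, which is exactly what is needed to pin the limsup to $1/\sqrt{2\alpha}$ for $\alpha\in(0,\infty)$ and to get $\pm\infty$ for $\alpha=0$; the $\alpha=\infty$ case, including rationals, falls out of the upper bound alone. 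I checked the key computations: $P_m\le N_m<P_{m+1}$ (using $d_{m+1}\ge 2$, valid since $x\in A(\alpha)$ with $\alpha<\infty$ is irrational and the digits strictly increase), the exact evaluation $\{N_m/P_k\}=\sum_{j\ \mathrm{odd},\,j\le k}(d_j-1)/(d_j\dotsm d_k)$, the interchange of finite sums, and the error bookkeeping $\sum_{j\le m+1}1/d_j\le H_{m+1}=o(m)$ from $d_j\ge j$; all are sound, and the mirror construction for the liminf goes through identically. The trade-off: the paper buys brevity by leaning on Shallit's Theorems~2 and~3, while your route supplies exactly the details the paper omits, treats all $\alpha\in[0,\infty]$ in one uniform scheme, and only needs Lemma~\ref{sum of reciprocal lemma} as an optional refinement. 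The remaining work is purely expository (writing out the even-index mirror case and stating explicitly that rationals lie only in $A(\infty)$), not mathematical.
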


\begin{proof}
The inclusion $A(1) \subseteq S(1)$ was established by Shallit in \cite[Theorem 3]{Sha94}. For each $\alpha \in (0, \infty]$, by following a similar line to the proof of \cite[Theorem 3]{Sha94} and by adjusting a parameter, and employing Lemmas \ref{growth rate of log product lemma} and \ref{sum of reciprocal lemma}, it is not hard to derive the desired inclusion. Hence, we omit the details.

Now, suppose that $x \in A(0)$. We show that $x \in S(0)$, i.e.,
\[
\limsup_{N \to \infty} \frac{N x - L(f(x), N)}{\sqrt{\log N}} = \infty
\quad \text{and} \quad
\liminf_{N \to \infty} \frac{N x - L(f(x), N)}{\sqrt{\log N}} = -\infty.
\]
Let $\varepsilon \in (0, \infty)$ be arbitrary. Since $(\log d_n(x))/n \to 0$ as $n \to \infty$ by the hypothesis, we can find a $K \in \mathbb{N}$ such that $\log d_n(x) < n \varepsilon$ for all $n>K$. For each $j \in \mathbb{N}$, put
\[
N_j \coloneqq -1 + d_1(x) - d_1(x) d_2(x) + \dotsb + (-1)^{j+1} d_1(x) d_2(x) \dotsm d_j(x).
\]
Notice that
\begin{align} \label{lower bound of N odd}
1 < N_3 < N_5 < \dotsb
\quad \text{and} \quad
N_{2r+1} < \prod_{j=1}^{2r+1} d_j(x) \text{ for all } r \in \mathbb{N}.
\end{align}
Then, for any positive integer $r \geq K/2$, we have
\[
0 < \log N_3 \leq \log N_{2r+1} < \sum_{j=1}^{2r+1} \log d_j(x) < \sum_{j=1}^K \log d_j(x)  + \varepsilon \sum_{j=K+1}^{2r+1} j.
\]
By using \cite[Theorem 2]{Sha94}, which states that
\[
N_{2r+1} x - L(f(x), N_{2r+1}) \geq \frac{r}{4} > 0 \quad \text{for any } r \in \mathbb{N},
\]
we find that
\[
\frac{N_{2r+1}x - L(f(x), N_{2r+1})}{\sqrt{\log N_{2r+1}}} > \frac{r/4}{\sqrt{\sum_{j=1}^K \log d_j(x) + \varepsilon (2r-K+1)(2r+K+2)/2}}
\]
for any positive integer $r \geq K/2$. Since $(N_{2r+1})_{r \in \mathbb{N}}$ is a strictly increasing sequence of positive integers by \eqref{lower bound of N odd}, it follows that
\[
\limsup_{N \to \infty} \frac{N x - L(f(x), N)}{\sqrt{\log N}} \geq \limsup_{r \to \infty} \frac{N_{2r+1}x - L(f(x), N_{2r+1})}{\sqrt{\log N_{2r+1}}} \geq \frac{1}{4\sqrt{2 \varepsilon}}.
\]
By letting $\varepsilon \to 0^+$, we obtain the desired limsup. One can prove the liminf part in a similar way by using the sequence $(M_j)_{j \in \mathbb{N}}$ defined by $M_j \coloneqq - N_j$ for each $j \in \mathbb{N}$. We leave the detailed calculations as an exercise for the readers.
\end{proof}

\begin{proof} [Proof of Theorem \ref{leap year theorem}]
Let $\alpha \in [0, \infty]$. We have $A(\alpha) \subseteq S(\alpha)$ by Lemma \ref{A alpha subset S beta lemma}. Since $A(\alpha)$ is dense in $[0,1]$ by Corollary \ref{LLN corollary}(\ref{LLN corollary 1}), we conclude that $S(\alpha)$ is dense in $[0,1]$. This proves part (\ref{leap year theorem 1}). Now, let $U$ be a non-empty open subset of $[0,1]$. Then, by Lemma \ref{A alpha subset S beta lemma}, we have $U \cap A(\alpha) \subseteq U \cap S(\alpha)$. Here, $\hdim (U \cap A(\alpha)) = 1$ by Corollary \ref{LLN corollary}(\ref{LLN corollary 2}), and thus, $\hdim (U \cap S(\alpha)) = 1$ by monotonicity (Proposition \ref{monotonicity and countable stability}(\ref{monotonicity and countable stability 1})). This establishes part (\ref{leap year theorem 2}).
\end{proof}

\begin{remark}
We have not determined whether $S(\alpha)$, $\alpha \in [0, \infty]$, is finite replacement-invariant. Since we already know that $S(\alpha)$ contains some dense subset of $[0,1]$ with full Hausdorff dimension, such a determination was not necessary. Moreover, if $S(\alpha)$ turns out to be finite replacement-invariant, one must ascertain first that $S(\alpha)$ is uncountable or that $\hdim S(\alpha) \neq 0$ before applying Theorem \ref{intersection with open set theorem} directly to $S(\alpha)$ in proving Theorem \ref{leap year theorem}(\ref{leap year theorem 1}). For Theorem \ref{leap year theorem}(\ref{leap year theorem 2}), even more critically, one must ascertain the precise value of $\hdim S(\alpha)$.
\end{remark}

\begin{proof} [Proof of Corollary \ref{leap year corollary}]
Put $F \coloneqq [0,1] \setminus S(1)$, i.e., $F$ is the set of exceptions to \eqref{law of leap years}. Let $\alpha \in [0, \infty] \setminus \{ 1 \}$. By definition, it is clear that $S(\alpha) \subseteq F$. But, due to Theorem \ref{leap year theorem}, $S(\alpha)$ is dense in $[0,1]$ and $\hdim S(\alpha) = 1$. Therefore, $F$ is dense in $[0,1]$, and, by monotonicity (Proposition \ref{monotonicity and countable stability}(\ref{monotonicity and countable stability 1})), $\hdim F = 1$.
\end{proof}

\section*{Acknowledgements}

I would like to express my gratitude to the referees for their thorough review and thoughtful suggestions, which have greatly improved the readability of this manuscript. I am especially grateful for their insight that the assumption in Theorem \ref{intersection with open set theorem}(\ref{intersection with open set theorem 1}) can be weakened to uncountability, rather than the non-zero Hausdorff dimension used in the original manuscript.

\end{document}